 
\documentclass[11pt]{amsart}
\usepackage{amsmath, amssymb}
\usepackage{amsfonts}
\usepackage{mathrsfs}
\usepackage{adjustbox}
\usepackage[arrow,matrix,curve,cmtip,ps]{xy}

\usepackage{amsthm}
\usepackage{color}


\usepackage{verbatim}
 
 

\newtheorem{theorem}{Theorem}[section]
\newtheorem{lemma}[theorem]{Lemma}
\newtheorem{proposition}[theorem]{Proposition}
\newtheorem{corollary}[theorem]{Corollary}

\newtheorem*{theorem*}{Theorem}
\theoremstyle{remark}

\newtheorem{definition}[theorem]{Definition}
\newtheorem{example}[theorem]{Example}

\newcommand\bgb[1]{{#1}}


\numberwithin{equation}{section}
 
 
\newcommand{\Z}{\mathbb{Z}}
\newcommand{\N}{\mathbb{N}}
\newcommand{\R}{\mathbb{R}}

\newcommand{\F}{\mathcal{F}}
\newcommand{\G}{\mathcal{G}}

\newcommand{\gae}{\lower 2pt \hbox{$\, \buildrel {\scriptstyle >}\over {\scriptstyle
\sim}\,$}}
 
\newcommand{\lae}{\lower 2pt \hbox{$\, \buildrel {\scriptstyle <}\over {\scriptstyle
\sim}\,$}}
 
\newcommand{\MU}[1]{
\setbox0\hbox{$#1$}
\setbox1\hbox{$W$}
\ifdim\wd0>\wd1 #1^{\sim} \else \widetilde{#1} \fi
}


\begin{document}
\title[What is odd about binary Parseval frames?]{What is odd about binary Parseval frames?}
 
 \author[Z. J. Baker]{Zachery J. Baker}
\author[B. G. Bodmann]{Bernhard G. Bodmann}
\author[M. G. Bullock]{Micah G. Bullock}
\author[S. N. Branum]{Samantha N. Branum}
\author[J. McLaney]{Jacob E. McLaney}
 
\address{Department of Mathematics \\ University of Houston \\ Houston, TX 77204-3008 \\USA}
\email{zacherybaker96@gmail.com}
\email{bg{}b{}@{}math{}.uh.{}edu}
\email{micahbullock@outlook.com}
\email{sambranum@gmail.com}
\email{mclaneyjacob@gmail.com}

\thanks{This research was supported by NSF grant DMS-1109545 and DMS-1412524}
 
\date{\today}
 
\subjclass[2010]{42C15}
 
\keywords{frames, Parseval frames, finite-dimensional vector spaces, binary numbers, orthogonal extension principle,
switching equivalence, Gram matrices}
 
 
\begin{abstract}
This paper examines the construction and properties of binary Parseval frames.
We address two questions: When does a binary Parseval frame have
a complementary Parseval frame? Which binary symmetric idempotent matrices
are Gram matrices of binary Parseval frames? In contrast to the case of real or complex 
Parseval frames, the answer to these questions is not always affirmative. The key
to our understanding comes from an algorithm that constructs
binary orthonormal sequences that span a given subspace, whenever possible.
Special regard is given to binary frames whose Gram matrices are circulants.
\end{abstract}
 
\maketitle

\section{Introduction}

Much of the literature on frames, from its beginnings in non-harmonic Fourier analysis \cite{DS} to
comprehensive overviews of theory and applications \cite{Chr03,CK07a,CK07b} assumes an underlying structure of a real or complex Hilbert space
to study approximate expansions of vectors. Indeed, the correspondence between
vectors in Hilbert spaces and linear functionals given by the Riesz representation theorem provides a convenient
way to characterize Parseval frames, sequences of vectors that behave in a way that is similar
to orthonormal bases without requiring the vectors to be linearly independent~\cite{Chr03}.
Incorporating linear dependence relations is useful to permit more flexibility for expansions
and to suppress errors that may model faulty signal transmissions in applications \cite{Marshall84,Marshall89,RG03,RG04,HP04,PK05,BP05}.

The concept of frames has also been established even in vector spaces without
(definite) inner product \cite{B09,HKLW}.  In fact, the well known theory of binary codes could be seen as
a form of frame theory, in which linear dependence relations among binary vectors 
are examined \cite{MWS77,HPvR99,BBFKKW}. Here, binary vector spaces are defined over the finite field
with two elements; a frame for a finite dimensional binary vector space is simply a spanning sequence \cite{B09}. In a preceding paper \cite{B14}, the study of binary codes from a frame theoretic perspective has
lead to additional combinatorial insights in the design of error-correcting codes.

The present paper is concerned with binary Parseval frames. These binary frames provide explicit
expansions of binary vectors using a bilinear form that resembles the
dot product in Euclidean spaces. In contrast to the inner product on real or complex
Hilbert spaces, there are many nonzero vectors whose dot product with themselves vanishes. 
Such vectors have special significance in our results. Counting the number of non-vanishing entries  
motivates calling them \emph{even} vectors, whereas the others are called \emph{odd}.
As a consequence of the degeneracy of the bilinear form, there are some striking differences 
with frame theory over real or complex Hilbert spaces. In this paper, we explore
the construction and properties of binary Parseval frames, and 
compare them with real and complex ones. Our main results are as follows:

In the real or complex case, it is known that each Parseval frame has a Naimark complement \cite{Chr03}.
The complementarity is most easily formulated by stating that the Gram matrices of two complementary
Parseval frames sum to the identity. We show that in the binary case, not every Parseval frame
has a Naimark complement. In addition, we show that a necessary and sufficient condition
for its existence is that the Parseval frame contains at least one even vector.

Moreover, we study the structure of Gram matrices. 
The Gram matrices of real or complex Parseval frames are characterized as symmetric or hermitian
idempotent matrices. The binary case requires the additional condition that at least one column vector
of the matrix is odd.

The general results we obtain are illustrated with examples. Special regard is given to 
cyclic binary Parseval frames, whose Gram matrices are circulants.
 
\section{Preliminaries}\label{sec:prelim}

\bgb{We define the notions of a binary frame and a binary Parseval frame as in a previous paper \cite{B09}. The vector space that these sequences of vectors span is the direct sum $\Z_2^n = \Z_2 \oplus \dotsb \oplus \Z_2$ of $n$ copies of $\Z_2$ for some $n \in \N$. Here, $\Z_2$ is the field of binary numbers 
with the two elements $0$ and $1$, the neutral element with respect to addition and the multiplicative identity.}

\begin{definition}
A \emph{binary frame} is a sequence $\mathcal F = \{f_1 , \ldots, f_k\}$ in a binary vector space $\mathbb Z_2^n$ such that
$
\mathrm{span}\,{\F} = \mathbb Z_2^n.
$
\end{definition}

\bgb{A simple example of a frame is the canonical basis $\{e_1, e_2, \dots, e_n\}$ for $\Z_2^n$. The $i$th vector
has components  $(e_i)_j = \delta_{i,j}$, thus $(e_i)_i=1$ is the only non-vanishing entry for $e_i$. Consequently, a vector $x=(x_i)_{i=1}^n$ is expanded in terms of the canonical basis
as $x = \sum_{i=1}^n x_i e_i$. }

Frames provide similar expansions of vectors in linear combinations of the frame vectors. Parseval frames are especially convenient for this purpose,
because the linear combination can be determined with little effort. In the real or complex case, this only requires computing values 
of inner products between the vector to be expanded and the frame vectors. Although we cannot introduce a
non-degenerate inner product in the binary case, we define Parseval frames using a bilinear form that resembles the dot product on $\R^n$. Other choices of bilinear forms and a more general theory of binary frames have been investigated elsewhere, see \cite{HLS}.

\begin{definition}
\bgb{The \emph{dot product} on $\mathbb Z_2^n$ is the bilinear map $( \cdot, \cdot ) : \Z_2^n \times \Z_2^n \to \Z_2$ given by}
\[
\left( \left( \begin{smallmatrix} x_1 \\ \vdots \\ x_n \end{smallmatrix} \right),  \left( \begin{smallmatrix} y_1 \\ \vdots \\ y_n \end{smallmatrix} \right) \right) := \sum_{i=1}^n x_i y_i.
\]
\end{definition}

With the help of this dot product, we define a Parseval frame for $\mathbb{Z}_2^n$.

\begin{definition} \label{def:pframe}
A \emph{binary Parseval frame} is a sequence of vectors $\F = \{ f_1, \ldots$, $f_k \}$ in $\Z_2^n$ such that 
for all $x \in \Z_2^n$, the sequence satisfies the reconstruction identity
\begin{equation} \label{Reconstruction-Formula-Z2}
x = \sum_{j=1}^k ( x, f_j )f_j  \, .
\end{equation}
\bgb{To keep track of the specifics of such a Parseval frame, we then also say that $\F$ is a \emph{binary $(k,n)$-frame}.}
\end{definition}

In the following, we use matrix algebra whenever it is convenient for establishing properties of frames. We write $A \in M_{m,n} (\Z_2)$ when $A$ an $m \times n$ matrix with entries in $\Z_2$ and identify $A$ with \bgb{the} linear map from $\Z_2^n$ to $\Z_2^m$ \bgb{induced  by left multiplication of any (column) vector $x \in \Z_2^n$ with $A$.}
We let $A^*$ denote the adjoint of $A\in M_{m,n}(\Z_2)$; that is, $(Ax, y) =(x,A^* y)$ for all $x\in \Z_2^n, y \in \Z_2^m$
and consequently, $A^*$ is the transpose of $A$.   

\begin{definition}
Each frame $\F = \{ f_1, \ldots, f_k \}$ is associated with its \emph{analysis matrix} $\Theta_\F$,
whose $i$th row is given by the $i$th frame vector for $i \in \{1, 2, \dots, k\}$.
Its transpose $\Theta_\F^*$ is called the \emph{synthesis matrix}.
\end{definition}



With the help of matrix multiplication, the reconstruction formula (\ref{Reconstruction-Formula-Z2})
of a binary $(k,n)$-frame $\F$  with analysis matrix $\Theta_\F$ is simply expressed as
\begin{equation} \label{eq:recidmat}
   \Theta^*_\F \Theta_\F = I_n \, ,
\end{equation}
where $I_n$ is the $n\times n$ identity matrix. We also note that for any $x, y \in \Z_2^n$, then
$$
  \langle \Theta_\F x, \Theta_\F y \rangle = \langle x, y \rangle
$$
which motivates speaking of $\Theta_\F$ as an \emph{isometry}, as in the case of real or complex inner product spaces.

Another way to interpret identity~(\ref{eq:recidmat}) is in terms of the \emph{column} vectors of $\Theta_\F$.
Again borrowing a concept from Euclidean spaces, we introduce orthonormality.

\begin{definition}
We say that a sequence  of vectors $\{v_1, v_2, \dots, v_r\}$  in $\mathbb Z_2^n$ 
is \emph{orthonormal} if $(v_i, v_j) = \delta_{i,j}$ for $i, j \in \{1,2, \dots, r\}$, that  is, the 
dot product of the pair $v_i$ and $v_j$ vanishes unless $i=j$, in which case it is equal to one. 
\end{definition}

Inspecting the matrix identity (\ref{eq:recidmat}),
we see that
a binary $k \times n$ matrix $\Theta$ is the analysis matrix of a binary Parseval frame if
and only if
the columns of $\Theta$ form an orthonormal sequence in $\mathbb Z_2^k$.

The orthogonality relations between the frame vectors are recorded in the Gram matrix,
whose entries consists of the dot products of all pairs of vectors.

\begin{definition}
The \emph{Gram matrix} of a binary frame $\F=\{f_1, f_2, \dots, f_k\}$ for $\mathbb Z_2^n$ is the 
$k\times k$ matrix $G$ with entries
$ G_{i,j} = (f_j , f_i)$. 
\end{definition}

It is straightforward to verify that the Gram matrix of $\F$ is expressed as the composition
of the analysis and synthesis matrices,
$$
  G = \Theta_\F \Theta^*_\F \, .
$$
The identity (\ref{eq:recidmat}) implies that the Gram matrix of a Parseval frame satisfies the equations
$$
  G = G^* = G^2 \, .
$$ 
For frames over the real or complex numbers, these equations characterize the set of all Gram matrices of Parseval frames
as orthogonal projection matrices. 
However, in the binary case, this is only a necessary condition, as shown in the following proposition and the subsequent example.

\begin{proposition}\label{prop:MnotG}
If $M$ is binary matrix that satisfies $M=M^2=M^*$ and it has only even column vectors,
then $M$ is not the Gram matrix of a binary Parseval frame.
\end{proposition}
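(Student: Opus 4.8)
The plan is to argue by contradiction: suppose $M = M^2 = M^*$ is the Gram matrix of a binary Parseval frame $\F = \{f_1,\dots,f_k\}$ for $\Z_2^n$, so that $M = \Theta_\F \Theta_\F^*$ with $\Theta_\F^* \Theta_\F = I_n$. The diagonal entries of $M$ are $M_{i,i} = (f_i,f_i)$, and by hypothesis every column of $M$ is even, so in particular the diagonal of $M$ is identically zero: $(f_i,f_i) = 0$ for all $i$. The idea is to extract from this a contradiction with the reconstruction identity, namely that every $x \in \Z_2^n$ satisfies $x = \sum_{j=1}^k (x,f_j) f_j$.

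The key computation is to evaluate $(x,x)$ for an arbitrary $x$ using the reconstruction formula. Write $x = \sum_{j} (x,f_j) f_j$ and take the dot product with $x$ again, or better, expand $(x,x)$ directly:
\[
(x,x) = \Bigl( \sum_{i}(x,f_i)f_i, \sum_{j}(x,f_j)f_j \Bigr) = \sum_{i,j} (x,f_i)(x,f_j)(f_i,f_j).
\]
Split this sum into diagonal and off-diagonal parts. The diagonal terms $\sum_i (x,f_i)^2 (f_i,f_i)$ all vanish because $(f_i,f_i)=0$. The off-diagonal part is $\sum_{i \neq j}(x,f_i)(x,f_j)(f_i,f_j)$, which is symmetric in $i,j$ and therefore, being a sum over unordered pairs counted twice, equals $0$ in $\Z_2$. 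Hence $(x,x) = 0$ for every $x \in \Z_2^n$. But taking $x = e_1$ gives $(e_1,e_1) = 1 \neq 0$ (and since $\F$ is a frame, $n \geq 1$ so such an $x$ exists), which is the desired contradiction.

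The main point to get right — and the only genuine subtlety — is the handling of the $\Z_2$ arithmetic in the double sum, specifically that the off-diagonal contribution cancels in pairs because we are in characteristic $2$ and $(f_i,f_j) = (f_j,f_i)$; one should phrase this cleanly, e.g. by writing $\sum_{i\ne j} = \sum_{i<j} + \sum_{i>j}$ and noting the two pieces are equal. Everything else is a direct consequence of Definition~\ref{def:pframe} and the assumption that the columns of $M$ — equivalently its diagonal entries $(f_i,f_i)$, since a column being even forces its diagonal entry to be $0$ — are even. Note the argument actually shows the stronger statement that a binary Parseval frame consisting only of even vectors cannot exist in a nonzero space, which is exactly the obstruction being isolated here.
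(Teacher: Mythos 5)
Your argument is correct in substance but takes a genuinely different route from the paper's. The paper works with the \emph{columns} of the analysis matrix: from $G\Theta = \Theta\Theta^*\Theta = \Theta$, each column $\omega$ of $\Theta$ satisfies $G\omega=\omega$, hence lies in the span of the (even) columns of $G$ and is therefore even, while orthonormality of the columns forces $(\omega,\omega)=1$ --- a contradiction. You instead work with the \emph{rows} of $\Theta$, i.e.\ the frame vectors themselves: once all $f_i$ are even, the reconstruction identity forces the quadratic form $x\mapsto (x,x)$ to vanish identically, because in the expansion $\sum_{i,j}(x,f_i)(x,f_j)(f_i,f_j)$ the diagonal terms die by hypothesis and the off-diagonal terms cancel in pairs in characteristic $2$. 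Your version is a bit longer but isolates a stronger, quotable fact --- no binary Parseval frame for $\Z_2^n$, $n\ge 1$, consists entirely of even vectors --- and makes the characteristic-$2$ cancellation explicit; the paper's version is shorter and feeds directly into the eigenvector/range language reused in its later section on Gram matrices.

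One step does need justification. You pass from ``every column of $M$ is even'' to ``the diagonal of $M$ is identically zero'' with ``so in particular,'' and you later assert that ``a column being even forces its diagonal entry to be $0$.'' For a general binary matrix this is false: an even column can have a nonzero diagonal entry. The implication holds here only because $M=M^2=M^*$, since over $\Z_2$
\[
M_{i,i}=(M^2)_{i,i}=\sum_j M_{i,j}M_{j,i}=\sum_j M_{j,i}^2=\sum_j M_{j,i},
\]
so the $i$th diagonal entry equals the parity of the $i$th column. (This is exactly the observation the paper records at the beginning of Section~\ref{sec:binaryGram}.) With that one line inserted, your proof is complete.
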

\begin{proof}
If $G$ is the Gram matrix of a Parseval frame with analysis
operator $\Theta$, then $G \Theta = \Theta \Theta^* \Theta = \Theta$,
and thus for each column $\omega$ of $\Theta$, we obtain the
eigenvector equation $G\omega=\omega$. By the orthonormality of the columns of $\Theta$, 
each $\omega$ is odd. 

On the other hand, if $M$ has only even columns, then any eigenvector
corresponding to eigenvalue one is even, because it is a linear combination of the column vectors of $M$. 
This means $M$ cannot be the Gram matrix of a binary Parseval frame.
\end{proof}

The following example shows that idempotent symmetric matrices that are not Gram matrices of binary Parseval frames exist for any odd dimension $k \ge 3$.

\begin{example}\label{ex:MnotG}
Let $k\ge 3$ be odd and let $M$ be the $k \times k$ matrix whose entries are all equal to one
except for vanishing entries on the diagonal,
$ M_{i,j} = 1 -\delta_{i,j} \, , i, j \in \{1, 2, \dots, k\}.
$ 
This matrix
satisfies $M=M^2 = M^*$, but only has even columns and 
by the preceding proposition, it is not the Gram matrix of a binary Parseval frame.
\end{example}

As shown in Section~\ref{sec:binaryGram}, having only even column vectors is the only way a binary symmetric idempotent
matrix can fail to be the Gram matrix of a Parseval frame. 
The construction of Example~\ref{ex:MnotG} is intriguing, because the alternative choice where $k$ is odd and all entries of $M$ are equal to one \emph{is} the Gram matrix of a binary Parseval frame. The relation between these two alternatives can be 
interpreted as complementarity, which will be explored in more detail in the next section.


\section{Complementarity for binary Parseval frames}

Over the real or complex numbers, each Parseval frame has a so-called Naimark complement \cite{Chr03}; if $G$ is the Gram matrix
of a real or complex Parseval frame, then it is an orthogonal projection matrix, and so is $I-G$, which makes it
the Gram matrix of a complementary Parseval frame.

We adopt the same definition for the binary case.

\begin{definition}
Two binary Parseval frames $\F$ and $\G$ having analysis operators $\Theta_\F \in M_{k,n}(\Z_2)$ and $\Theta_\G \in M_{k,k-n}(\Z_2)$ are \emph{complementary}
if 
$$   \Theta_\F \Theta_\F^* + \Theta_\G \Theta_\G^* = I_k \, .$$
We also say that $\F$ and $\G$ are \emph{Naimark complements} of each other.
\end{definition}

There is an equivalent statement of complementarity in terms of the block matrix $ U = (\Theta_\F \, \Theta_\G)$
formed by adjoining $\Theta_\F$ and $\Theta_\G$ being \emph{orthogonal}, meaning $U U^* = U^* U = I$,
just as 
in the real case (or as $U$ being unitary in the complex case).

\begin{proposition}\label{prop:compequiv}
Two binary Parseval frames $\F$ and $\G$ having analysis operators $\Theta_\F \in M_{k,n}(\Z_2)$ and $\Theta_\G \in M_{k,k-n}(\Z_2)$ are complementary
if and only if the block matrix  $ (\Theta_\F \, \Theta_\G)$ is an orthogonal $k \times k$ matrix. \end{proposition}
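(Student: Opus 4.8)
The plan is to package the two analysis operators into the single $k\times k$ matrix $U = (\Theta_\F\,\,\Theta_\G)$, whose transpose $U^*$ is obtained by stacking $\Theta_\F^*$ on top of $\Theta_\G^*$. Block multiplication then gives $UU^* = \Theta_\F\Theta_\F^* + \Theta_\G\Theta_\G^*$, so the defining relation of complementarity is word for word the equation $UU^* = I_k$. This immediately settles one direction: if $U$ is orthogonal then in particular $UU^* = I_k$, hence $\F$ and $\G$ are complementary. It also shows that the remaining task is to deduce $U^*U = I_k$ from $UU^* = I_k$.

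For that I would use the fact that $U$ is square --- here the hypotheses $\Theta_\F\in M_{k,n}(\Z_2)$ and $\Theta_\G\in M_{k,k-n}(\Z_2)$ enter, so that $U$ has $n+(k-n)=k$ columns. Over a field, a one-sided inverse of a square matrix is automatically two-sided: from $UU^* = I_k$ the linear map $U$ is surjective on $\Z_2^k$, hence bijective, and then $U^* = U^{-1}$, so $U^*U = I_k$ as well. Therefore complementarity implies that $U$ is orthogonal, completing the equivalence.

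If one prefers a more hands-on version of the converse, one can instead compute $U^*U$ directly as the block matrix with diagonal blocks $\Theta_\F^*\Theta_\F = I_n$ and $\Theta_\G^*\Theta_\G = I_{k-n}$ --- these being the reconstruction identities \eqref{eq:recidmat} for $\F$ and $\G$ --- and off-diagonal block $\Theta_\F^*\Theta_\G$ together with its transpose; the substance of the converse is then precisely that complementarity forces $\Theta_\F^*\Theta_\G = 0$, which is exactly what the one-sided-inverse argument supplies. There is no genuine obstacle in this proposition; the one point worth flagging is simply that squareness of $U$ is essential for passing from $UU^* = I_k$ to $U^*U = I_k$.
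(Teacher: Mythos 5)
Your argument is correct and is essentially the same as the paper's: both identify complementarity with the single equation $UU^* = I_k$ via block multiplication and then use the squareness of $U$ to promote the one-sided inverse to a two-sided one, giving orthogonality. The extra detail you supply (surjectivity implies bijectivity over a field, and the alternative computation of $U^*U$ in blocks) only elaborates on the same route.
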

\begin{proof}
In terms of the block matrix $(\Theta_\F \, \Theta_\G)$, the complementarity is expressed as 
$$  (\Theta_\F \, \Theta_\G) (\Theta_\F \, \Theta_\G)^*  = I_k \, .$$
Since $U = (\Theta_\F \, \Theta_\G) $ is a square matrix, $U U^*=I$  is equivalent to $U^*$ also being 
a left inverse of $U$, meaning $U U^* = U^* U = I_k$, $U$ is orthogonal. 
\end{proof}

In the binary case, not every Parseval frame has a Naimark complement. For example, if $k\ge 3$ is odd and $n=1$,
the frame consisting of $k$ vectors $\{1,1,\dots, 1\}$ in $\mathbb Z_2$ is Parseval, and the Gram matrix $G$ is the $k\times k$ matrix
whose entries are all equal to one. However,  $I-G\equiv I+G$ is the matrix $M$ appearing in Example~\ref{ex:MnotG},
which is not the Gram matrix of a binary Parseval frame. This motivates the search for a condition that
characterizes the existence of complementary Parseval frames. 

\subsection{A simple condition for the existence of complementary Parseval frames}

We observe that if $\mathcal F$ is a Parseval frame with analysis operator $\Theta_\F$ that extends to an orthogonal
matrix, then the column vectors of $\Theta_\F$ are a subset of a set of $n$ orthonormal vectors.
This is true in the binary as well as the real or complex case. Thus, one could try to relate the construction of a complementary Parseval frame to a Gram-Schmidt orthogonalization strategy.
Indeed, this idea allows us to formulate a concrete condition that characterizes when $\mathcal F$ has a complementary Parseval
frame. We prepare this result with a lemma about extending orthonormal sequences.

\begin{lemma}
A binary orthonormal sequence $\mathcal Y = \{v_1, v_2, \dots, v_r\}$ in $\mathbb Z_2^k$ with $r \le k-1$ extends to 
an orthonormal sequence $\{v_1, v_2, \dots, v_k\}$ if and only if $\sum_{i=1}^r v_i \ne \iota_k$,
where $\iota_k$ is the vector in $\Z_2^k$ whose entries are all equal to one.
\end{lemma}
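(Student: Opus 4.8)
The plan is to characterize when a binary orthonormal sequence $\{v_1,\dots,v_r\}$ in $\Z_2^k$ can be grown by one vector, and then iterate. The key combinatorial fact is that a vector $w$ is orthogonal to all of $v_1,\dots,v_r$ precisely when $w$ lies in the orthogonal complement $\mathcal Y^\perp$, which has dimension $k-r$ since $v_1,\dots,v_r$ are linearly independent (orthonormality forces independence: a vanishing linear combination paired with any $v_i$ kills the corresponding coefficient). So the real issue is finding $w\in\mathcal Y^\perp$ that is \emph{odd}, i.e.\ $(w,w)=1$; such a $w$ extends the sequence to length $r+1$, and crucially the new sequence $\{v_1,\dots,v_r,w\}$ is again orthonormal, so one can repeat. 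The obstruction to extendability is therefore exactly that every vector of $\mathcal Y^\perp$ is even.

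First I would record the ``one-step'' criterion: $\mathcal Y$ fails to extend by a single orthonormal vector if and only if $(w,w)=0$ for all $w\in\mathcal Y^\perp$. Next I would identify when this happens. Observe that $v_i\in\mathcal Y^\perp$ for each $i$ (by orthonormality), and that $(u,u)=(\iota_k,u)$ for every $u\in\Z_2^k$, since $(u,u)=\sum u_i^2=\sum u_i=(\iota_k,u)$ in $\Z_2$. Hence ``every $w\in\mathcal Y^\perp$ is even'' says precisely that $\iota_k$ is orthogonal to the whole subspace $\mathcal Y^\perp$, i.e.\ $\iota_k\in(\mathcal Y^\perp)^\perp$. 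Now I need that $(\mathcal Y^\perp)^\perp=\mathrm{span}\,\mathcal Y$: the inclusion $\supseteq$ is immediate, and equality follows by a dimension count, $\dim\mathcal Y^\perp + \dim(\mathcal Y^\perp)^\perp = k$ with $\dim\mathcal Y^\perp = k-r$, giving $\dim(\mathcal Y^\perp)^\perp = r = \dim\mathrm{span}\,\mathcal Y$. (This is the one place where the degeneracy of the bilinear form must be handled with care — I would verify the rank–nullity statement $\dim V^\perp = k-\dim V$ directly from the fact that $v\mapsto(v,\cdot)$ has image of dimension $\dim V$, using that $\mathcal Y$ is independent so the Gram-type map is injective on it.) Therefore every $w\in\mathcal Y^\perp$ is even $\iff \iota_k\in\mathrm{span}\,\mathcal Y$, and since $v_1,\dots,v_r$ are independent and $\iota_k\neq 0$, the latter is equivalent to $\iota_k$ being a nonempty sum of the $v_i$'s — but by orthonormality, $(\iota_k, v_j)=(v_j,v_j)=1$ for each $j$, so if $\iota_k=\sum_{i\in S}v_i$ then pairing with $v_j$ gives $1 = [j\in S]$ for all $j$, forcing $S=\{1,\dots,r\}$. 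Thus the obstruction is exactly $\sum_{i=1}^r v_i=\iota_k$.

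For the forward direction (extendable $\Rightarrow$ $\sum v_i\neq\iota_k$): contrapositively, if $\sum_{i=1}^r v_i=\iota_k$, then for any $w\perp v_1,\dots,v_r$ we get $(w,w)=(\iota_k,w)=\sum_i(v_i,w)=0$, so no odd orthogonal vector exists and the sequence cannot be extended even by one step. For the converse (if $\sum v_i\neq\iota_k$ then extendable to full length $k$): I would argue by downward induction on $r$. If $\sum_{i=1}^r v_i\neq\iota_k$, then by the above there exists an odd $w\in\mathcal Y^\perp$; set $v_{r+1}=w$. The enlarged sequence is orthonormal of length $r+1$, and its vector sum is $\iota_k + w \neq \iota_k$ if $w\neq 0$ — and $w\neq 0$ since $(w,w)=1$. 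Wait: more carefully, the new sum is $\sum_{i=1}^{r}v_i + w$, which equals $\iota_k$ only if $w = \iota_k + \sum_{i=1}^r v_i$; I must ensure I can \emph{choose} $w$ so that this does not happen when $r+1<k$. Since $\mathcal Y^\perp$ has dimension $k-r\geq 2$ in that case, the odd vectors in $\mathcal Y^\perp$ form an affine-type set large enough to avoid any single prescribed value (the set of odd vectors in $\mathcal Y^\perp$ is $\{u\in\mathcal Y^\perp:(\iota_k,u)=1\}$, a coset of a hyperplane in $\mathcal Y^\perp$ of size $2^{k-r-1}\geq 2$), so a valid choice avoiding the bad value exists; when $r+1=k$ no further step is needed. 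Iterating yields the full orthonormal sequence $\{v_1,\dots,v_k\}$. The main obstacle I anticipate is precisely this bookkeeping in the inductive step — making sure that at each stage the partial sum stays away from $\iota_k$ so the induction hypothesis applies — together with the careful verification of $\dim\mathcal Y^\perp = k-r$ and $(\mathcal Y^\perp)^\perp=\mathrm{span}\,\mathcal Y$ over the degenerate form; everything else is a short computation with the identity $(u,u)=(\iota_k,u)$.
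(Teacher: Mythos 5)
Your argument is correct and is essentially the paper's proof in different clothing: your one-step criterion (an odd vector exists in $\mathcal Y^\perp$ iff $\iota_k\notin\operatorname{span}\mathcal Y$ iff $\sum_{i=1}^r v_i\ne\iota_k$) is the paper's consistency analysis of the block system $\left(\begin{smallmatrix} V\\ \iota_k^*\end{smallmatrix}\right)v_{s+1}=\left(\begin{smallmatrix}0_s\\1\end{smallmatrix}\right)$, and your count of $2^{k-r-1}\ge 2$ odd vectors in $\mathcal Y^\perp$ when $r\le k-2$ is exactly the paper's count of the affine solution set, used for the same purpose of keeping the partial sum away from $\iota_k$ so the induction can continue. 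The only cosmetic difference is the forward direction, which the paper derives from the reconstruction formula $\iota_k=\sum_{i=1}^k(\iota_k,v_i)v_i$ for the completed frame together with linear independence, rather than from your contrapositive via the identity $(w,w)=(\iota_k,w)$.
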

\begin{proof}
If the sequence extends, then $\{v_1, v_2, \dots, v_k\}$ forms a Parseval frame for $\Z_2^k$,
and by the orthonormality, $\sum_{i=1}^k v_i = \sum_{i=1}^k (\iota_k, v_i) v_i =  \iota_k$. On the other hand, the orthonormality
forces the set $\{v_1, v_2, \dots, v_k\}$ to be linearly independent, so $\iota_k$ cannot be
expressed as a linear combination of a proper subset.

To show the converse,
we use an inductive proof. Let $V$ be the analysis operator associated with an orthonormal sequence $
\{v_1, v_2, \dots, v_s\}$, $r \le s \le k-1$ satisfying  $\sum_{i=1}^s v_i \ne \iota_k$.
To extend the sequence by one vector, we need to find $v_{s+1}$ with $(v_{s+1},v_{s+1})=(v_{s+1}, \iota_k) = 1$
and with $(v_{j},v_{s+1}) = 0$ for all $1 \le j \le s$. Using block matrices this is summarized
in the equation
\begin{equation} \label{eq:extension}
\left( \begin{array}{c} V \\ \iota_k^* \end{array} \right) v_{s+1} = \left( \begin{array}{c} 0_s \\ 1 \end{array} \right) \, , 
\end{equation}
where $0_s$ is the zero vector in $\Z_2^s$. 

In order to verify that this equation is consistent, we note that
by the orthonormality of the sequence $\{v_1, v_2, \dots, v_s\}$, the vector $\iota_k$ is  a linear combination 
if and only if $\sum_{i=1}^{s} v_i = \iota_k$. Thus, there exists $v_{s+1}$ which extends the orthonormal sequence.
This is all that is needed if $s=k-1$.



Next, we need to show that if $s\le k-2$, then  a solution $v_{s+1}$ can be chosen so that 
$\sum_{i=1}^{s+1} v_i \ne \iota_k$, so that the iterative extension procedure can be continued.
The solution set of equation (\ref{eq:extension}) forms an affine subspace of $\Z_2^k$  
having dimension $k-(s+1)$, thus contains $2^{k-s-1}$ elements. If $s \le k-2$, then there are
at least two elements in this affine subspace. Consequently, there is one choice of $v_{s+1}$ 
such that   $\sum_{i=1}^{s+1} v_i \ne \iota_k$. 
\end{proof}

We are ready to characterize the complementarity property  for binary Parseval frames.
The condition that determines the existence of a Naimark complement is whether
at least one  frame vector is even, that is, its entries sum to zero.

%
%
\bgb{
\begin{theorem}\label{thm:bpeven}
A binary $(k,n)$-frame $\F$ with $n<k$ has a complementary Parseval frame if and only if 
at least one frame vector is even.
\end{theorem}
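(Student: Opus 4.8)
The plan is to reduce the statement to the extension lemma just proved, the translation being almost mechanical once one identifies the right vector in $\Z_2^k$. By Proposition~\ref{prop:compequiv}, $\F$ has a complementary Parseval frame if and only if its analysis operator $\Theta_\F \in M_{k,n}(\Z_2)$ extends to an orthogonal $k \times k$ matrix $U = (\Theta_\F \, \Theta_\G)$; in that case $U^* U = I_k$ forces the columns of $\Theta_\G$ to be orthonormal, so $\Theta_\G$ is the analysis operator of a Parseval frame $\G$, and conversely any complementary $\G$ produces such a $U$. Since the columns of an orthogonal matrix form an orthonormal sequence and, conversely, a square matrix whose columns are orthonormal is orthogonal (a one-sided inverse of a square matrix is two-sided, as in the proof of Proposition~\ref{prop:compequiv}), this is equivalent to saying that the orthonormal sequence $\{\omega_1, \dots, \omega_n\}$ consisting of the columns of $\Theta_\F$ extends to an orthonormal sequence $\{\omega_1, \dots, \omega_k\}$ in $\Z_2^k$. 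The hypothesis $n < k$ guarantees the bound $r \le k-1$ needed to invoke the preceding lemma with $r = n$.

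Next I would apply that lemma: the sequence $\{\omega_1, \dots, \omega_n\}$ extends if and only if $\sum_{i=1}^n \omega_i \ne \iota_k$. The crux is then to recognize this sum. The $j$th entry of $\omega_i$ is the $(j,i)$ entry of $\Theta_\F$, which is the $i$th component $(f_j)_i$ of the $j$th frame vector; hence the $j$th entry of $\sum_{i=1}^n \omega_i$ is $\sum_{i=1}^n (f_j)_i$, which equals $0$ if $f_j$ is even and $1$ if $f_j$ is odd. Thus $\sum_{i=1}^n \omega_i = \iota_k$ exactly when every $f_j$ is odd, and $\sum_{i=1}^n \omega_i \ne \iota_k$ exactly when at least one $f_j$ is even.

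Combining the two equivalences yields the theorem: $\F$ has a complementary Parseval frame if and only if $\sum_{i=1}^n \omega_i \ne \iota_k$, if and only if at least one frame vector is even. I do not anticipate a genuine obstacle here, since the substantive work has already been absorbed into the extension lemma; the only care required is to pair the "only if'' and "if'' directions of the lemma with the corresponding directions of the theorem and to verify the dimension count $n \le k-1$, after which the identification of the column sum of $\Theta_\F$ with the parity vector of $\{f_1,\dots,f_k\}$ finishes the argument.
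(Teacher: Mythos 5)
Your proposal is correct and follows essentially the same route as the paper: reformulate the evenness condition as $\Theta_\F\iota_n=\sum_{i=1}^n\omega_i\ne\iota_k$, identify the existence of a Naimark complement with extending the orthonormal columns of $\Theta_\F$ to an orthonormal sequence of $k$ vectors via Proposition~\ref{prop:compequiv}, and apply the extension lemma. You simply spell out the details (the column-sum/parity identification and the square-matrix inverse argument) more explicitly than the paper does.
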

\begin{proof}
We first rewrite the condition on the frame vectors in the equivalent form $\Theta_\F \iota_n \ne \iota_k$.
Re-expressed in terms of the column vectors $\{\omega_1, \omega_2, \dots, \omega_n\}$ of $\Theta_\F$, we claim that 
a complementary Parseval frame exists if and only if $\sum_{i=1}^n \omega_n \ne \iota_k$.

On the other hand,
the existence of a complementary Parseval frame is equivalent to
  the sequence of column vectors having an extension to an orthonormal sequence of $k$ elements.
  
  The preceding lemma provides the existence of such an extension if and 
  only if $\sum_{i=1}^n \omega_n \ne \iota_k$, which finishes the proof.
\end{proof}
}

\subsection{A catalog of binary Parseval frames with the complementarity property}\label{sec:catalog}

 
A previous work contained a catalogue of binary Parseval frames for $\mathbb Z_2^n$ when $n$ was small \cite{B09}. 
Here, we wish to compile a list of the binary Parseval frames that have a complementary Parseval frame. 
For notational convenience, we consider $\Theta_\F$ instead of the sequence of frame vectors. 
By Proposition~\ref{prop:compequiv}, every such $\Theta_\F$ is obtained by a selection of columns from a binary orthogonal matrix,
so we could simply list the set of all orthogonal matrices for small $k$. However, such a list quickly becomes extensive
as $k$ increases. To reduce the number of orthogonal matrices, we note that 
although the frame depends on the order in which the columns are selected to form $\Theta_\F$,
the Gram matrix does not. Identifying frames whose Gram matrices coincide
has already been used to avoid repeating information when examining  real or complex frames \cite{B} and 
binary frames~\cite{B09}. We consider an even coarser underlying equivalence relation \cite{GKK01,HP04,BP05} that has also appeared
in the context of binary frames \cite{B09}.

\begin{definition}
Two families $\F=\{f_1, f_2, \dots, f_k\}$ and $\mathcal G = \{g_1, g_2, \dots g_k\}$ in $\mathbb Z_2^n$
are called \emph{switching equivalent} if there is an orthogonal $n\times n$ matrix $U$ and a permutation $\pi$ of the set
$\{1, 2, \dots k\}$ such that
$$
   f_j = U g_{\pi(j)} \mbox{ for all } j \in \{1, 2, \dots \} \, .
$$ 
\end{definition}

Representing the permutation $\pi$ by the associated permutation matrix $P$ with entries $P_{i,j} = \delta_{i,\pi(j)}$
gives that if $\F$ and $\G$ are switching equivalent, then $\Theta_\F = P \Theta_\G U$
with an orthogonal $n \times n$ matrix $U$ and a $k \times k$ permutation matrix $P$.
Alternatively, switching equivalence is stated in the form of an identity for the corresponding Gram matrices.

\begin{theorem}[\cite{B09}]
Two binary $(k,n)$-frames $\F$ and $\G$ are switching equivalent if and only if their
Gram matrices are related by conjugation with a $k\times k$ permutation matrix $P$,
$$ G_\F = P G_\G P^* \, .
$$
\end{theorem}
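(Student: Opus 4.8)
The plan is to prove the two directions separately: one direction is an immediate computation with the identity $UU^*=I_n$, and the other is proved by writing down an explicit orthogonal matrix that intertwines the two analysis operators.

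\emph{Forward direction.} Suppose $\F$ and $\G$ are switching equivalent. As recorded just before the statement, this is the same as $\Theta_\F = P\Theta_\G U$ for some $k\times k$ permutation matrix $P$ and some orthogonal $n\times n$ matrix $U$. Then, using $UU^*=I_n$,
\[
  G_\F = \Theta_\F \Theta_\F^* = P\Theta_\G U U^* \Theta_\G^* P^* = P\Theta_\G\Theta_\G^* P^* = P G_\G P^* \, ,
\]
which is the asserted identity.

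\emph{Converse.} Suppose $G_\F = P G_\G P^*$ for a $k\times k$ permutation matrix $P$. I would set $U := \Theta_\G^* P^* \Theta_\F \in M_{n,n}(\Z_2)$ and check that $U$ realizes the switching equivalence. Since $\Theta_\F^*\Theta_\F = I_n$ by \eqref{eq:recidmat}, we have $G_\F\Theta_\F = \Theta_\F\Theta_\F^*\Theta_\F = \Theta_\F$, and hence
\[
  P\Theta_\G U = P\Theta_\G\Theta_\G^* P^* \Theta_\F = P G_\G P^* \Theta_\F = G_\F \Theta_\F = \Theta_\F \, ,
\]
so $\Theta_\F = P\Theta_\G U$, which is the matrix form of switching equivalence (comparing rows, it says precisely $f_j = U^* g_{\pi^{-1}(j)}$ for the permutation $\pi$ underlying $P$). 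It remains to see that $U$ is orthogonal. Using $\Theta_\G\Theta_\G^* = G_\G$, then the hypothesis $P G_\G P^* = G_\F$, then $\Theta_\F^*\Theta_\F = I_n$ twice,
\[
  U^* U = \Theta_\F^* P \Theta_\G\Theta_\G^* P^* \Theta_\F = \Theta_\F^* P G_\G P^* \Theta_\F = \Theta_\F^* G_\F \Theta_\F = \Theta_\F^* \Theta_\F \Theta_\F^* \Theta_\F = I_n \, .
\]
As $U$ is a square matrix over the field $\Z_2$, a one-sided inverse is automatically two-sided, so $UU^* = I_n$ as well and $U$ is orthogonal, completing the converse.

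\emph{Expected obstacle.} There is no genuine difficulty here: the whole proof is a short chain of manipulations with $\Theta^*\Theta = I_n$ (i.e.\ \eqref{eq:recidmat}), $G = \Theta\Theta^*$, and the hypothesis $G_\F = P G_\G P^*$. The only two points that deserve to be stated with a line of care are the standard fact that over a field a square matrix possessing a one-sided inverse is invertible — this is what upgrades $U^*U = I_n$ to full orthogonality of $U$ — and the routine row-by-row verification that the matrix identity $\Theta_\F = P\Theta_\G U$ is genuinely equivalent to the indexed definition of switching equivalence, which is harmless because orthogonal matrices and permutations are both closed under inversion, so whether one records $U$ or $U^*$, and $\pi$ or $\pi^{-1}$, is immaterial.
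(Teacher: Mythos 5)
Your proof is correct. Note that the paper itself states this theorem as a citation from \cite{B09} and supplies no proof, so there is nothing internal to compare against; your argument (forward direction by the computation $P\Theta_\G UU^*\Theta_\G^*P^*=PG_\G P^*$, converse by constructing $U=\Theta_\G^*P^*\Theta_\F$ and verifying $P\Theta_\G U=\Theta_\F$ and $U^*U=I_n$ from $G\Theta=\Theta$ and $\Theta^*\Theta=I_n$) is the standard one and handles the only delicate points correctly, namely upgrading the one-sided identity $U^*U=I_n$ to orthogonality and the harmless bookkeeping of $U$ versus $U^*$ and $\pi$ versus $\pi^{-1}$.
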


We deduce a consequence for switching equivalence and Naimark complements, which
is inferred from the role of the Gram matrices in the definition of complementarity.

\begin{corollary}
If $\F$ and $\G$ are switching equivalent binary $(k,n)$-frames, then $\F$ has a Naimark complement if and only if 
$\G$ does.
\end{corollary}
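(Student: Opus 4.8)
The plan is to reduce the statement to Theorem~\ref{thm:bpeven}, which characterizes the existence of a Naimark complement purely by whether the frame contains an even vector. Since switching equivalence relates $\F$ and $\G$ through an orthogonal matrix $U$ together with a relabeling $\pi$, and orthogonal matrices preserve the dot product, the property of containing an even vector should be manifestly preserved, and the corollary will follow.

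Concretely, I would first recall that a vector $v \in \Z_2^n$ is even exactly when $(v,v) = 0$: the number of nonzero entries of $v$ equals $\sum_i v_i$ as an integer, which reduces mod $2$ to $\sum_i v_i = \sum_i v_i^2 = (v,v)$ in $\Z_2$. Next, using $f_j = U g_{\pi(j)}$ with $U$ orthogonal, I would compute $(f_j, f_j) = (U g_{\pi(j)}, U g_{\pi(j)}) = (g_{\pi(j)}, U^* U g_{\pi(j)}) = (g_{\pi(j)}, g_{\pi(j)})$, so $f_j$ is even if and only if $g_{\pi(j)}$ is even. Because $\pi$ is a bijection of $\{1,\dots,k\}$, the family $\F$ contains at least one even vector if and only if $\G$ does. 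Invoking Theorem~\ref{thm:bpeven} (valid for $n<k$) then gives that $\F$ has a complementary Parseval frame precisely when $\G$ does.

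As an alternative that sidesteps any concern about the boundary case $n = k$, I would argue at the level of Gram matrices. By the cited characterization of switching equivalence, $G_\F = P G_\G P^*$ for some $k \times k$ permutation matrix $P$, and since $P P^* = I_k$ this yields $I_k - G_\F = P(I_k - G_\G) P^*$. Conjugation by a permutation matrix carries the analysis operator of a binary Parseval frame to that of another one, since it merely permutes the ambient coordinates of $\Z_2^k$ and hence preserves orthonormality of the columns; thus $I_k - G_\F$ is the Gram matrix of a binary Parseval frame if and only if $I_k - G_\G$ is. As $\F$ admits a Naimark complement exactly when $I_k - G_\F$ is such a Gram matrix, the equivalence follows.

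The only genuinely delicate point is bookkeeping: ensuring that the orthogonality of $U$ (equivalently, of the coordinate-permuting matrix $P$) is used so that neither ``evenness of a frame vector'' nor ``being a Parseval Gram matrix'' is destroyed, and deciding whether to phrase the corollary for $n<k$ to match Theorem~\ref{thm:bpeven} or to dispatch $n=k$ separately by noting that an orthonormal basis always has the empty frame as its Naimark complement. Beyond that, I anticipate no substantial obstacle.
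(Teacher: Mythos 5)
Your proposal is correct; in fact you give two valid proofs. Your ``alternative'' Gram-matrix argument is exactly what the paper has in mind: the corollary is stated there with no written proof beyond the remark that it ``is inferred from the role of the Gram matrices in the definition of complementarity,'' and your computation $I_k - G_\F = P(I_k - G_\G)P^*$ together with the observation that conjugation by a permutation matrix preserves the property of being the Gram matrix of a binary Parseval frame (since $P\Theta$ has orthonormal columns whenever $\Theta$ does) is the intended fleshing-out. Your primary route, via Theorem~\ref{thm:bpeven} and the invariance of $(v,v)$ under an orthogonal $U$, is a genuinely different reduction: it trades the structural statement about Gram matrices for the concrete even-vector criterion, at the cost of having to respect the hypothesis $n<k$ in that theorem (which you correctly flag and dispatch). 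The Gram-matrix route is the more robust of the two since it needs no case split and no appeal to the characterization theorem, while the even-vector route makes transparent \emph{why} the property is a switching invariant, namely that orthogonal matrices preserve the dot product and hence parity of frame vectors. Either one suffices.
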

 
 Thus, to provide an exhaustive list, we only need to ensure that
 at least one representative of each switching equivalence class appears as a selection of columns
 in the orthogonal matrices we include.
 To reduce the number of representatives,  we 
identify matrices up to row and column permutations. 

\begin{definition}
Two matrices $A, B \in M_{k,k}(\Z_2)$ are called \emph{permutation equivalent} 
if there are two permutation matrices $P_1, P_2 \in M_{k,k}(\Z_2)$ such that
$A = P_1 B P_2^*$.
\end{definition}

\begin{proposition}
If $U_1$ and $U_2$ are permutation equivalent binary orthogonal matrices,
then each $(k,n)$-frame $\F$ formed by a sequence of $n$ columns of $U_1$ is switching
equivalent to a $(k,n)$-frame $\G$ formed with columns of $U_2$.
\end{proposition}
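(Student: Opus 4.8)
The plan is to unwind the definition of permutation equivalence and match it against the definition of switching equivalence. Suppose $U_1 = P_1 U_2 P_2^*$ for permutation matrices $P_1, P_2 \in M_{k,k}(\Z_2)$, and let $\F = \{f_1, \dots, f_k\}$ be a $(k,n)$-frame whose analysis matrix $\Theta_\F$ consists of a choice of $n$ columns of $U_1$. Concretely, there is a $k \times n$ matrix $S$ each of whose columns is a standard basis vector of $\Z_2^k$ (a ``column selection'' matrix) with distinct columns, so that $\Theta_\F = U_1 S = P_1 U_2 P_2^* S$. The key observation is that $P_2^* S$ is again a column selection matrix: permuting the rows of a matrix whose columns are distinct standard basis vectors again yields distinct standard basis vectors. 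Hence $\Theta_\F = P_1 (U_2 (P_2^* S))$, and $U_2 (P_2^* S)$ is precisely a matrix $\Theta_\G$ formed by a sequence of $n$ columns of $U_2$; call the corresponding frame $\G$.

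Next I would check that $\F$ and $\G$ are switching equivalent in the sense of the definition given above. We have $\Theta_\F = P_1 \Theta_\G$, i.e. the analysis matrices differ by left multiplication by a $k \times k$ permutation matrix, with the $n \times n$ ``rotation'' part being the identity matrix $U = I_n$ (which is trivially orthogonal). Reading off the rows: the $j$th frame vector of $\F$ equals the $\pi(j)$th frame vector of $\G$, where $\pi$ is the permutation represented by $P_1$. This is exactly the relation $f_j = U g_{\pi(j)}$ with $U = I_n$, so $\F$ and $\G$ are switching equivalent. (If one prefers the Gram matrix formulation, $G_\F = \Theta_\F \Theta_\F^* = P_1 \Theta_\G \Theta_\G^* P_1^* = P_1 G_\G P_1^*$, which is conjugation by a permutation matrix, and then the cited theorem of \cite{B09} gives switching equivalence directly.)

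One small point worth addressing explicitly: we should confirm that $\G$ is genuinely a $(k,n)$-frame, i.e. that $\Theta_\G = U_2 (P_2^* S)$ is the analysis matrix of a binary Parseval frame. This holds because its columns are among the columns of the orthogonal matrix $U_2$, hence form an orthonormal sequence in $\Z_2^k$, which by the characterization following \eqref{eq:recidmat} is exactly the condition for $\Theta_\G$ to be an analysis matrix of a binary Parseval frame; moreover $P_2^* S$ has $n$ distinct columns so $\G$ has $k$ vectors spanning an $n$-dimensional space. The main (and only real) obstacle is the bookkeeping observation that conjugating/permuting by the $P_i$ turns a column selection of $U_1$ into a column selection of $U_2$ together with an overall row permutation; once that is stated cleanly, the result follows immediately from the definition of switching equivalence.
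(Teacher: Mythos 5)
Your proof is correct and follows essentially the same route as the paper's: you use the relation $U_1 = P_1 U_2 P_2^*$ to identify the selected columns of $U_1$ with a corresponding selection of columns of $U_2$ up to the row permutation $P_1$, and conclude $G_\F = P_1 G_\G P_1^*$, which is the switching equivalence criterion. Your column-selection-matrix bookkeeping just makes explicit what the paper handles with a ``without loss of generality'' reduction to the first $n$ columns.
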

\begin{proof}
Without loss of generality, we can assume that the analysis matrix $\Theta_\F$ is formed by the
first $n$ columns of $U_1$.  By the equivalence of $U_1$ and $U_2$, $U_1 P_2 = P_1 U_2$
with permutation matrices $P_1$ and $P_2$. The right multiplication of $U_1$ with $P_2$ 
gives a column permutation, which identifies a sequence of columns in $P_1 U_2$ that
is identical to the first $n$ columns of $U_1$. If $\G$ is obtained with the corresponding columns in $U_2$,
then the Gram matrices of $\F$ and $\G$ are related by $G_\F = P_1 G_\G P_1^*$,
which proves the switching equivalence.
\end{proof}

A list of permutation-inequivalent orthogonal $k\times k$ matrices 
allows us to obtain
the Gram matrix of each binary $(k,n)$-frame with a Naimark complement by selecting
an appropriate choice of $n$ columns from an orthogonal $k \times k$ matrix to form $\Theta$ and then by
applying a permutation matrix $P$ to obtain $G_\F = P \Theta \Theta^* P^*$.

Accordingly, each representative of an equivalence class of orthogonal matrices can be chosen
so that the columns
are in lexicographical order. 
\bgb{Table~\ref{tab:orthrep} contains a complete list of representatives of binary orthogonal matrices
for $k \in \{3,4,5,6\}$ from each permutation-equivalence class.
\begin{table}[h]
\centering
\begin{tabular}{|l|llllllllll|}
\hline
\multicolumn{11}{|l|}{Orthogonal $k \times k$ Matrices, $3 \le k \le 6$} \\ \hline
$k$    & \multicolumn{6}{|l}{Integer corresponding to binary column vectors}              &        &    &    &        \\ \hline
3    & 1     & 2     & 4     &        &        &        &        &    &    &        \\ \hline
4    & 1     & 2     & 4     & 8      &        &        &        &    &    &    \\
     & 7     & 11    & 13    & 14     &        &        &        &    &    &        \\ \hline
5    & 1     & 2     & 4     & 8      & 16     &        &        &    &    &        \\
     & 4     & 11    & 19    & 25     & 26     &        &        &    &    &        \\
     & 7     & 8     & 19    & 21     & 22     &        &        &    &    &        \\
     & 7     & 11    & 13    & 14     & 16     &        &        &    &    &        \\ \hline
6    & 1     & 2     & 4     & 8      & 16     & 32     &        &    &    &        \\
     & 4     & 8     & 19    & 35     & 49     & 50     &        &    &    &        \\
     & 4     & 11    & 16    & 35     & 41     & 42     &        &    &    &        \\
     & 4     & 11    & 19    & 25     & 26     & 32     &        &    &    &        \\
     & 7     & 8     & 16    & 35     & 37     & 38     &        &    &    &        \\
     & 7     & 8     & 19    & 21     & 22     & 32     &        &    &    &        \\
     & 7     & 11    & 13    & 14     & 16     & 32     &        &    &    &      \\
     & 13    & 14    & 28    & 44     & 55     & 59     &        &    &    &        \\
     & 21    & 22    & 28    & 47     & 52     & 59     &        &    &    &        \\
     & 25    & 26    & 28    & 47     & 55     & 56     &        &    &    &        \\
     & 31    & 37    & 38    & 44     & 52     & 59     &        &    &    &        \\
     & 31    & 41    & 42    & 44     & 55     & 56     &        &    &    &        \\
     & 31    & 47    & 49    & 50     & 52     & 56     &        &    &    &        \\
     & 31    & 47    & 55    & 59     & 61     & 62     &        &    &    &        \\ \hline
\end{tabular}
\medskip
\caption{Representatives of permutation equivalence classes of orthogonal matrices. 
Up to switching equivalence, the Gram matrix of each binary $(k,n)$-frame with a Naimark complement
is obtained by selecting appropriate columns in one of the listed $k \times k$ orthogonal matrices.}\label{tab:orthrep}
\end{table}
Each column vector in our list is recorded  by the integer obtained from the binary expansion
with the entries of the vector. For example, if a frame vector in $\mathbb Z_2^4$ is $f_1=(1,0,1,1)$,
then it is represented by the integer $2^0+2^2+2^3=13$. Accordingly,
in $\mathbb Z_2^4$, the standard basis is recorded as the sequence of numbers
$1,2,4,8$. }

\section{Gram matrices of binary Parseval frames}\label{sec:binaryGram}

The preceding section on complementarity hinged on the problem that
if $G$ is the Gram matrix of a binary Parseval frame, then $I-G$ may not, although 
it is symmetric and idempotent. \bgb{Again, there is a simple condition that needs to be added;
Gram matrices  of binary Parseval frames are symmetric and idempotent \emph{and} have at least one odd column, that
is, a column whose entries sum to one.
Because of the identity $G^2=G$, having an odd column is equivalent to having a non-zero diagonal entry.
Indeed, it has been shown that for \emph{any} binary symmetric
matrix $G$ without vanishing diagonal, there is a factor $\Theta$ such that $G=\Theta \Theta^*$ and the rank of $\Theta$ 
is equal to that of $G$ \cite{Lem75}. The assumptions needed for our proof are stronger, but our algorithm for producing 
$\Theta$ appears to be more straightforward than the factorization procedure for general symmetric binary matrices. 

}

\bgb{
\begin{theorem}\label{thm:bpg}
A binary symmetric idempotent matrix $M$ is the Gram matrix of a Parseval frame
if and only if it has at least one odd column. 
\end{theorem}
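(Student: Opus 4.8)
The plan is to treat the two implications separately. The ``only if'' direction is nothing more than the contrapositive of Proposition~\ref{prop:MnotG}: a binary symmetric idempotent matrix has either an odd column or only even columns, and in the latter case it fails to be a Gram matrix. So the content lies in the converse, which I would first recast geometrically. Set $W = \im M \subseteq \Z_2^k$. Since $M = M^* = M^2$, we have $\ker M = (\im M)^\perp = W^\perp$, and because $M$ is idempotent $\Z_2^k = \im M \oplus \ker M = W \oplus W^\perp$; in particular $W \cap W^\perp = 0$, so the dot product restricted to $W$ is non-degenerate. Moreover, $M$ having an odd column means some column $m_i = Me_i$ satisfies $(m_i,m_i) = M_{ii} = 1$, and $m_i \in W$; conversely, if all columns were even, then $W$, being their span, would contain only even vectors. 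Thus the hypothesis is equivalent to the statement that $W$ is a non-degenerate subspace of $\Z_2^k$ containing an odd vector.

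Second, I would record why producing an orthonormal basis $\{\omega_1,\dots,\omega_n\}$ of $W$ (with $n = \dim W = \rank M$) finishes the proof. Let $\Theta \in M_{k,n}(\Z_2)$ have these vectors as columns. Orthonormality gives $\Theta^*\Theta = I_n$, so by the characterization following~\eqref{eq:recidmat} the rows of $\Theta$ form a binary $(k,n)$-frame $\F$ with analysis matrix $\Theta$. One then checks $\Theta\Theta^* = M$: the matrix $N := \Theta\Theta^*$ is symmetric, $N^2 = \Theta(\Theta^*\Theta)\Theta^* = N$, and $\im N = W$ (since $\im N \subseteq \im \Theta = \operatorname{span}\{\omega_1,\dots,\omega_n\} = W$ while $N$ fixes each $\omega_j$), so as above $\ker N = W^\perp$. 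Hence $N$ and $M$ are both the idempotent endomorphism of $\Z_2^k = W \oplus W^\perp$ projecting onto $W$ along $W^\perp$, forcing $N = M$. Therefore $G_\F = \Theta\Theta^* = M$, as desired.

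The crux is the following lemma, which I would prove by induction on $d = \dim W$: every non-degenerate subspace $W \le \Z_2^k$ containing an odd vector has an orthonormal basis. When $d = 1$, the odd generator is an orthonormal basis. When $d \ge 2$, choose an odd $w \in W$ and put $W' = W \cap w^\perp$; then $\dim W' = d-1$, $W = \langle w\rangle \oplus W'$, and $W'$ is again non-degenerate (if $v \in W'$ were orthogonal to all of $W'$ it would also be orthogonal to $w$, hence to $W$, hence zero). If $W'$ contains an odd vector, the induction hypothesis gives an orthonormal basis of $W'$, and adjoining $w$ yields one for $W$. The obstacle is the remaining case, where every nonzero vector of $W'$ is even, so the form on $W'$ is alternating and $d-1 \ge 2$. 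Here I would swap $w$ for a better odd vector: pick $a \in W' \setminus \{0\}$, and by non-degeneracy of $W'$ pick $b \in W'$ with $(a,b) = 1$. Then $w' := w + a$ is odd (since $(a,a) = 0$ and $w \perp W'$), the subspace $W \cap (w')^\perp$ is non-degenerate of dimension $d-1$, and it now contains the odd vector $w + b$ (indeed $(w+b, w') = (w,w) + (b,a) = 1+1 = 0$ and $(w+b,w+b) = (w,w) = 1$). Applying the induction hypothesis to $W \cap (w')^\perp$ and adjoining $w'$ completes the argument.

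In summary, the only genuine difficulty is the ``alternating'' case of the lemma; everything else is bookkeeping with the splitting $\Z_2^k = W \oplus W^\perp$. I would also note that the proof is constructive—it is an algorithm that greedily extends an orthonormal sequence inside $W$ and performs the correction $w \mapsto w+a$ exactly when the orthogonal complement taken within $W$ has turned alternating, which is the same obstruction behind Example~\ref{ex:MnotG} and Theorem~\ref{thm:bpeven}—and that the $\Theta$ it produces satisfies $\rank \Theta = \rank M$.
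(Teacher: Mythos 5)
Your proof is correct, and the converse direction takes a genuinely different route from the paper's. The paper also builds an orthonormal basis $\{\omega_1,\dots,\omega_n\}$ of $\im M$ consisting of odd vectors fixed by $M$, but it does so extrinsically: at each step it solves a block linear system over $\Z_2^k$ (built from a maximal independent set of rows of $I_k+M$, the previously chosen $\omega_i$, and $\iota_k^*$), and it secures the next step by a dimension count -- the affine solution set has $2^{n-s-1}\ge 2$ elements when $s\le n-2$, so some solution avoids the bad identity $\iota_k=\sum_i\omega_i+v$. You instead work intrinsically in $W=\im M$: the splitting $\Z_2^k=W\oplus W^\perp$ (valid since $M=M^*=M^2$ gives $\ker M=W^\perp$) reduces the theorem to the clean lemma that a non-degenerate subspace containing an odd vector admits an orthonormal basis, and your induction names the one genuine obstruction -- the complement $W\cap w^\perp$ becoming alternating -- and removes it by the explicit correction $w\mapsto w+a$, exhibiting the odd vector $w+b$ in the new complement. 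What each buys: the paper's version is closer to a directly implementable Gram--Schmidt algorithm on the ambient coordinates and needs no case analysis, at the cost of carrying the somewhat opaque global constraint involving $\iota_k$ through the induction; yours isolates the obstruction transparently (it is the same phenomenon behind Example~\ref{ex:MnotG}), makes the identification $M=\Theta\Theta^*$ immediate from uniqueness of the idempotent with prescribed image and kernel, and proves a reusable statement about non-degenerate subspaces. Your disposal of the ``only if'' direction by citing Proposition~\ref{prop:MnotG} is also legitimate; the paper rederives that implication via the inconsistency of a block system.
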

\begin{proof}
First, we re-express the condition on the columns of a symmetric $k \times k$ matrix $M$ in the equivalent
form of the matrix $I_k+M$ having at least one even column or row. This, in turn,
is equivalent to the inequality $(I_k+M)\iota_k \ne \iota_k$.

Next, we recall that both $M$ and $I_k+M$ are assumed to be idempotent.
We observe that any vector $y \in \Z_2^k$ is in the range of an idempotent $P$ if and only if $Py=y$ if and only if
$y$  is in the kernel of $I_k+P$.

Assuming $M$ is the Gram matrix of a Parseval frame, then $M = \Theta \Theta^*$
where $\Theta$ has orthonormal columns and
and $(M +I_k) \Theta = 0$. Combining the two properties gives
$$
   \left( \begin{array}{c} M+I_k \\ \iota_k^* \end{array} \right) \Theta =  \left( \begin{array}{c} 0_{k,n} \\ \iota_n \end{array} \right) \, .
$$
This is inconsistent if and only if $\iota_k$ is in the span of the columns of the idempotent $M+I_k$,
which is equivalent to $(M+I_k)\iota_k = \iota_k$.

Conversely, assuming that $M$ is symmetric and idempotent and that the range of $I_k+M$
does not contain $\iota_k$, we construct a matrix $\Theta$ with orthonormal columns such that $M = \Theta \Theta^*$.
 The assumption on $M$ is equivalent to $M \iota_k \ne 0$,
so at least one row or column of $M$ is odd. Let this column be $\omega_1$, then the fact that $M$ is 
idempotent gives $M\omega_1 = \omega_1$.

Next, we follow an inductive strategy similar to an earlier proof.
What we need is an orthonormal sequence $\{\omega_1, \dots, \omega_{n}\}$
such that $n$ is the rank of $M$ and $M\omega_i =\omega_i$ for all $i \in \{1, 2, \dots, n\}$.
In that case, the range of $M$ is the span of the sequence, and so is the range of $M^*$. 
Thus, the identities $\omega_j^* M \omega_i = \delta_{i,j} = \omega_j^* \Theta \Theta^* \omega_i $ 
for each $i, j \in \{1, 2, \dots, n\}$
imply $M = \Theta \Theta^*$.
 
Proceeding inductively, we need to extend a given orthonormal sequence
$\{\omega_1, \dots, \omega_s\}$ in the kernel of $I_k+M$ by one vector if $s \le n-1$
and if $\iota_k$ is not in the span of the columns of $I_k+M$ combined with the orthonormal sequence.
Let $V$ be a matrix formed by a maximal set of linearly independent rows in $I_k+M$,
then if $M$ has rank $n$, rank nullity gives that $V$ has $k-n$ rows.
Letting $Y$ be the analysis matrix of the orthonormal sequence $\{\omega_1, \dots, \omega_s\}$, then
extending it by one vector requires solving the equation
\begin{equation}\label{eq:VY}
  \left( \begin{array}{c} V\\ Y\\ \iota_k^* \end{array} \right) \omega_{s+1} = \left( \begin{array}{c} 0_{k-n}\\ 0_s\\ 1 \end{array} \right) \, .
\end{equation}

Moreover, in order to guarantee the induction assumption at the next step, we need to show that
if $s \le n-2$, then $\iota_k^*$ is not in the span of the rows of the matrix formed by $V$, $Y$ and $\omega_{s+1}^*$. 
As before, this is obtained by the fact that $V Y^* = 0$, so if $\iota_k = \sum_{i=1}^{s+1} c_i \omega_i + v$
with $v$ being in the span of the columns of $V^*$,
then $Yv=0$ and orthonormality forces $c_i = 1$ for all $i \in \{1, 2, \dots, s+1\}$.
The solutions of the equation (\ref{eq:VY}) form an affine subspace of dimension $k-(k-n)-s-1=n-s-1$, so if $s \le n-2$,
then there are at least two solutions, one of which does not satisfy the sum identity $\iota_k = \sum_{i=1}^{s+1} c_i \omega_i + v$.
\end{proof}
}

\section{Binary cyclic frames and circulant Gram matrices}

Next, we examine a special type of frame whose Gram matrices are circulants.
We recall that a cyclic subspace $V$ of $\mathbb Z_2^k$ has the property that
it is closed under cyclic shifts, that is,
the cyclic shift $S$, which is characterized by $Se_j = e_{j+1 \, (mod)\,  k}$,  
leaves $V$ invariant.

\begin{definition}
A frame $\mathcal F = \{f_1, f_2, \dots, f_k\}$ for $\mathbb Z_2^n$ is called a binary cyclic frame 
if the range of the analysis operator is invariant under the cyclic shift $S$.
If $\mathcal F$ is also Parseval, then we say that is a binary cyclic $(k,n)$-frame.
\end{definition}

Since the range of the Gram matrix $G$ belonging to a Parseval frame is identical to
the set of eigenvectors corresponding to eigenvalue one, we have a simple characterization
of Gram matrices of binary cyclic Parseval frames.

\begin{theorem}
A binary frame $\mathcal F = \{f_1, f_2, \dots, f_k\}$ for $\mathbb Z_2^n$ is a cyclic Parseval frame
if and only if its Gram matrix $G_\F$ is a symmetric idempotent circulant matrix, that is, $G_\F= G_\F^* = G_\F^2$ and $SG_\F S^* = G_\F$,
with only odd column vectors.
\end{theorem}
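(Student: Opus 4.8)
The plan is to combine two facts already in hand --- that the Gram matrix of any binary Parseval frame is symmetric and idempotent (Section~\ref{sec:prelim}), and, conversely, that a binary symmetric idempotent matrix is a Parseval Gram matrix exactly when it has at least one odd column (Theorem~\ref{thm:bpg}) --- with the elementary dictionary that relates the word ``circulant'' to the frame data. Three preliminary observations carry most of the weight. First, $S$ is a permutation matrix, hence orthogonal, $SS^* = S^*S = I_k$, so $SAS^* = A$ is equivalent to $SA = AS$; thus ``$G_\F$ is circulant'' means precisely that $G_\F$ commutes with $S$. Second, for a Parseval frame $\operatorname{range}\Theta_\F = \operatorname{range} G_\F = \{\,y \in \Z_2^k : G_\F y = y\,\}$, using $\Theta_\F^*\Theta_\F = I_n$ and $G_\F^2 = G_\F$; this is the identification, noted in the section's preamble, of the range of $\Theta_\F$ with the eigenspace of $G_\F$ for eigenvalue one. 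Third, every column of a circulant matrix is a cyclic shift of every other and a cyclic shift preserves the parity of the number of nonzero entries, so for a circulant matrix ``has at least one odd column'' and ``has only odd columns'' are the same condition --- which is why the theorem may be stated with the latter phrasing while Theorem~\ref{thm:bpg} uses the former.

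For the ``only if'' direction I would take $\F$ to be a cyclic Parseval frame. Then $G_\F = G_\F^* = G_\F^2$ by the preliminaries and $G_\F$ has an odd column by Theorem~\ref{thm:bpg}, hence only odd columns once it is shown to be circulant. To prove $SG_\F = G_\F S$, I use that $\operatorname{range}\Theta_\F$ is $S$-invariant: every column of $S\Theta_\F$ lies in the column space of $\Theta_\F$, so $S\Theta_\F = \Theta_\F R$ for some $n\times n$ matrix $R$. Evaluating $(S\Theta_\F)^*(S\Theta_\F)$ as $\Theta_\F^* S^* S\Theta_\F = I_n$ and also as $R^*\Theta_\F^*\Theta_\F R = R^* R$ gives $R^* R = I_n$, so $R$ is orthogonal. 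Right-multiplying $S\Theta_\F = \Theta_\F R$ by $R^*$ and then left-multiplying by $S^{-1} = S^*$ gives $S^{-1}\Theta_\F = \Theta_\F R^*$, whose transpose is $\Theta_\F^* S = R\Theta_\F^*$; hence $SG_\F = S\Theta_\F\Theta_\F^* = \Theta_\F R\Theta_\F^* = \Theta_\F\Theta_\F^* S = G_\F S$.

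For the converse, suppose $G_\F$ is symmetric, idempotent, circulant, with only odd columns. Since $\F$ spans $\Z_2^n$, the matrix $\Theta_\F$ has rank $n$, so it has a left inverse and $\Theta_\F^*$ has a right inverse; multiplying the identity $\Theta_\F\Theta_\F^*\Theta_\F\Theta_\F^* = \Theta_\F\Theta_\F^*$ (which is $G_\F^2 = G_\F$) on the left by the former and on the right by the latter yields $\Theta_\F^*\Theta_\F = I_n$, so $\F$ is Parseval. (Thus the ``symmetric'' and ``only odd columns'' clauses are automatically satisfied here once $G_\F$ is idempotent and circulant, but stating them does no harm.) Finally, $SG_\F = G_\F S$ by circularity, so $G_\F y = y$ implies $G_\F(Sy) = SG_\F y = Sy$; hence $\operatorname{range}\Theta_\F = \{\,y : G_\F y = y\,\}$ is invariant under $S$ and $\F$ is cyclic.

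The one step that is not routine bookkeeping is the forward-direction identity $SG_\F = G_\F S$, where invariance of the \emph{range} of $\Theta_\F$ must be promoted to a relation holding for $G_\F$ on all of $\Z_2^k$. Apart from the ``$R$ orthogonal'' computation above, a second route is to observe that nondegeneracy of the dot product on $\Z_2^k$ together with $G_\F = G_\F^* = G_\F^2$ forces $\ker G_\F = (\operatorname{range} G_\F)^\perp$ and $\Z_2^k = \operatorname{range} G_\F \oplus \ker G_\F$, so $G_\F$ is the projection onto $\operatorname{range}\Theta_\F$ along that complement; since $S$ is orthogonal, $(\operatorname{range} G_\F)^\perp$ is $S$-invariant as well, and $SG_\F S^* = G_\F$ may then be checked separately on $\operatorname{range} G_\F$ and on its complement, or deduced from the fact that a symmetric idempotent over $\Z_2^k$ is determined by its range. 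Everything else reduces to the identities $\Theta_\F^*\Theta_\F = I_n$, $G_\F = \Theta_\F\Theta_\F^*$ and $SS^* = I_k$.
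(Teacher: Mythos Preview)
Your proof is correct, and in both directions it follows a somewhat different path from the paper's.

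For the forward direction, the paper argues on the eigenspace: from $G_\F x = x$ and $S$-invariance it gets $G_\F S x = S x$, then passes to $y = S^* x$ and uses symmetry to conclude $\langle G_\F x, y\rangle = \langle S G_\F S^* x, y\rangle$ for all $x,y$ in the range, which (implicitly via the range/kernel decomposition) forces $G_\F = S G_\F S^*$ globally. Your primary route is more algebraic and arguably cleaner: writing $S\Theta_\F = \Theta_\F R$ and checking $R^*R = I_n$ gives $S G_\F = \Theta_\F R \Theta_\F^* = G_\F S$ in one line, with no need to lift an identity off the range. Your ``second route'' is essentially a streamlined version of what the paper does implicitly, made explicit through the observation that a symmetric idempotent over $\Z_2^k$ is determined by its range.

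For the converse, the paper invokes Theorem~\ref{thm:bpg} to produce \emph{some} Parseval frame with the given Gram matrix and then notes cyclicity. You instead work directly with the given frame $\F$: from $\operatorname{rank}\Theta_\F = n$ and $G_\F^2 = G_\F$ you cancel to $\Theta_\F^*\Theta_\F = I_n$, so $\F$ itself is Parseval. This is more faithful to the theorem as stated (which concerns a fixed $\F$, not merely the existence of a Parseval frame with Gram matrix $G_\F$), and your parenthetical remark that the ``odd columns'' hypothesis is then redundant in this direction is a nice bonus observation.
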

\begin{proof}
If $G_\F$ is the Gram matrix of a binary cyclic Parseval frame, then from the Parseval property, we know
that $G_\F=G_\F^* = G_\F^2$. Moreover, by the cyclicity of the frame,
the eigenspace
corresponding to eigenvalue one of $G_\F$ is invariant under $S$, and thus if $x=G_\F x$, then $Sx=SG_\F x=G_\F Sx$.
Using this identity repeatedly and writing $y=S^{k-1}x=S^*x$ gives $y=SG_\F S^* y$ for all $y$ in the range of $G_\F$.
By the symmetry of $G_\F$, the range of $G_\F$ is identical to that of $G_\F^*$, so $\langle G_\F x,y\rangle = \langle SG_\F S^* x, y\rangle$
for all $x, y$ in the range of $G_\F$ establishes the circulant property $G_\F=SG_\F S^*$.
If $G_\F$ is a circulant, then each column vector generates all the others 
by applying powers of the cyclic shift to it. Thus, if one column vector is odd, so are all the other column vectors.
Applying Theorem~\ref{thm:bpg} then yields that the Gram matrices of binary cyclic Parseval  frames 
are symmetric idempotent circulant matrices with only odd column vectors.

Conversely, if $G$ is a symmetric idempotent circulant and each column vector is odd, then
Theorem~\ref{thm:bpg} again yields that it is the Gram matrix of a binary Parseval  frame $\F$
with $G = \Theta_\F \Theta_\F^*$.
Moreover, the range of $G$ is invariant under the cyclic shift, because one column vector
generates all the others by applying powers of the cyclic shift to it. 
Since the range of $G$  is identical to that of $\Theta_\F$, $\F$ is a cyclic binary Parseval frame.
\end{proof} 

Since adding the identity matrix changes odd columns of $G$ to even columns, we conclude
that complementary Parseval frames do not exist for binary cyclic Parseval frames. 

\begin{corollary}
If $\mathcal F$ is a binary cyclic Parseval frame, then it has no complementary Parseval frame.  
\end{corollary}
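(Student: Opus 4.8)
The plan is to show that if $\mathcal F$ is a binary cyclic Parseval frame, then its Gram matrix $G = G_\F$ is a symmetric idempotent circulant with only odd columns, so that $I_k + G$ fails to be the Gram matrix of any binary Parseval frame, and therefore $\mathcal F$ admits no Naimark complement. First I would invoke the immediately preceding theorem characterizing Gram matrices of cyclic Parseval frames: since $\mathcal F$ is cyclic and Parseval, $G_\F$ is a symmetric idempotent circulant all of whose column vectors are odd.

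Next I would examine $I_k + G_\F$ (which equals $I_k - G_\F$ in characteristic two). Because $G_\F$ is idempotent, so is $I_k + G_\F$, and it is clearly symmetric; the only point to check is what happens to the parity of the columns. Adding the identity matrix flips exactly the diagonal entry of each column. Since every column of $G_\F$ is odd, and the diagonal entries of $G_\F$ are all equal to $1$ (an idempotent symmetric matrix has the property that a column is odd precisely when its diagonal entry is nonzero, as noted in the discussion preceding Theorem~\ref{thm:bpg}, and here all columns are odd), flipping the diagonal entry from $1$ to $0$ changes the column sum by one, turning every odd column into an even column. Hence $I_k + G_\F$ is a symmetric idempotent matrix all of whose columns are even.

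By Theorem~\ref{thm:bpg} — or equivalently by Proposition~\ref{prop:MnotG} — a binary symmetric idempotent matrix with no odd column is not the Gram matrix of any binary Parseval frame. Therefore $I_k + G_\F$ is not such a Gram matrix. Finally, if $\mathcal F$ had a complementary Parseval frame $\mathcal G$, then by definition $\Theta_\F \Theta_\F^* + \Theta_\G \Theta_\G^* = I_k$, i.e. $G_\G = I_k + G_\F$ would be the Gram matrix of the Parseval frame $\mathcal G$, a contradiction. Hence no complementary Parseval frame exists.

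There is essentially no serious obstacle here: the argument is a short deduction from the preceding theorem together with the parity bookkeeping for $I_k + G_\F$. The only mild subtlety is making the diagonal/parity observation precise — namely that all diagonal entries of the circulant idempotent $G_\F$ equal one, so that adding $I_k$ genuinely switches every column from odd to even rather than leaving some unchanged — but this follows directly from cyclicity (all columns share the same parity) together with idempotence (a column of a symmetric idempotent is odd iff its diagonal entry is $1$).
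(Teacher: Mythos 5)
Your proof is correct and follows essentially the same route as the paper: the preceding theorem gives that $G_\F$ is a symmetric idempotent circulant with only odd columns, and since (by symmetry and idempotence) each such column has diagonal entry $1$, adding $I_k$ turns every column even, so $I_k+G_\F$ cannot be the Gram matrix of a Parseval frame by Proposition~\ref{prop:MnotG}. Your explicit justification that all diagonal entries equal one is in fact slightly more careful than the paper's one-line remark.
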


In Table~\ref{tab:circp}, we provide an exhaustive list of the Gram matrices of cyclic binary Parseval frames with $3\le k \le 20$.
Factoring these into the corresponding analysis and synthesis matrices shows that many of these
examples contain repeated frame vectors. 
In an earlier paper, such repeated vectors have been associated with a trivial form of redundancy incorporated in the analysis matrix
$\Theta_\F$.
Table~\ref{tab:circpnontriv} lists the circulant Gram matrices 
of rank $n<k \le 20$
paired with $k \times n$
analysis matrices, for which no repetition of frame vectors occurs.
\begin{table}[bth]
\centering
\begin{adjustbox}{max width=\textwidth}
\begin{tabular}{|l|llllllllllllllllllll|}
\hline
$k$ & \multicolumn{20}{|l|}{First row of circulant $k\times k$ Gram matrix, $3 \le k \le 20$} \\ \hline
3 & 1 & 0 & 0 &  &  &  &  &  &  &  &  &  &  &  &  &  &  &  &  &    \\ \cline{2-21} 
 & 1 & 1 & 1 &  &  &  &  &  &  &  &  &  &  &  &  &  &  &  &  &    \\ \hline
4 & 1 & 0 & 0 & 0 &  &  &  &  &  &  &  &  &  &  &  &  &  &  &  &    \\ \hline
5 & 1 & 0 & 0 & 0 & 0 &  &  &  &  &  &  &  &  &  &  &  &  &  &  &    \\ \cline{2-21} 
 & 1 & 1 & 1 & 1 & 1 &  &  &  &  &  &  &  &  &  &  &  &  &  &  &    \\ \hline
6 & 1 & 0 & 0 & 0 & 0 & 0 &  &  &  &  &  &  &  &  &  &  &  &  &  &    \\ \cline{2-21} 
 & 1 & 0 & 1 & 0 & 1 & 0 &  &  &  &  &  &  &  &  &  &  &  &  &  &    \\ \hline
7 & 1 & 0 & 0 & 0 & 0 & 0 & 0 &  &  &  &  &  &  &  &  &  &  &  &    &  \\ \cline{2-21} 
 & 1 & 1 & 1 & 1 & 1 & 1 & 1 &  &  &  &  &  &  &  &  &  &  &  &  &    \\ \hline
8 & 1 & 0 & 0 & 0 & 0 & 0 & 0 & 0 &  &  &  &  &  &  &  &  &  &  &  &    \\ \hline
9 & 1 & 0 & 0 & 0 & 0 & 0 & 0 & 0 & 0 &  &  &  &  &  &  &  &  &  &  &    \\ \cline{2-21} 
 & 1 & 0 & 0 & 1 & 0 & 0 & 1 & 0 & 0 &  &  &  &  &  &  &  &  &  &  &    \\ \cline{2-21} 
 & 1 & 1 & 1 & 0 & 1 & 1 & 0 & 1 & 1 &  &  &  &  &  &  &  &  &  &  &    \\ \cline{2-21} 
 & 1 & 1 & 1 & 1 & 1 & 1 & 1 & 1 & 1 &  &  &  &  &  &  &  &  &  &  &    \\ \hline
10 & 1 & 0 & 0 & 0 & 0 & 0 & 0 & 0 & 0 & 0 &  &  &  &  &  &  &  &  &  &    \\ \cline{2-21} 
 & 1 & 0 & 1 & 0 & 1 & 0 & 1 & 0 & 1 & 0 &  &  &  &  &  &  &  &  &  &    \\ \hline
11 & 1 & 0 & 0 & 0 & 0 & 0 & 0 & 0 & 0 & 0 & 0 &  &  &  &  &  &  &  &    &  \\ \cline{2-21} 
 & 1 & 1 & 1 & 1 & 1 & 1 & 1 & 1 & 1 & 1 & 1 &  &  &  &  &  &  &  &  &    \\ \hline
12 & 1 & 0 & 0 & 0 & 0 & 0 & 0 & 0 & 0 & 0 & 0 & 0 &  &  &  &  &  &  &  &    \\ \cline{2-21} 
 & 1 & 0 & 0 & 0 & 1 & 0 & 0 & 0 & 1 & 0 & 0 & 0 &  &  &  &  &  &  &  &    \\ \hline
13 & 1 & 0 & 0 & 0 & 0 & 0 & 0 & 0 & 0 & 0 & 0 & 0 & 0 &  &  &  &  &  &  &    \\ \cline{2-21} 
 & 1 & 1 & 1 & 1 & 1 & 1 & 1 & 1 & 1 & 1 & 1 & 1 & 1 &  &  &  &  &  &  &   \\ \hline
14 & 1 & 0 & 0 & 0 & 0 & 0 & 0 & 0 & 0 & 0 & 0 & 0 & 0 & 0 &  &  &  &  &  &    \\ \cline{2-21} 
 & 1 & 0 & 1 & 0 & 1 & 0 & 1 & 0 & 1 & 0 & 1 & 0 & 1 & 0 &  &  &  &  &  &    \\ \hline
15 & 1 & 0 & 0 & 0 & 0 & 0 & 0 & 0 & 0 & 0 & 0 & 0 & 0 & 0 & 0 &  &  &  &  &    \\ \cline{2-21} 
 & 1 & 0 & 0 & 0 & 0 & 1 & 0 & 0 & 0 & 0 & 1 & 0 & 0 & 0 & 0 &  &  &  &  &    \\ \cline{2-21} 
 & 1 & 0 & 0 & 1 & 0 & 0 & 1 & 0 & 0 & 1 & 0 & 0 & 1 & 0 & 0 &  &  &  &  &    \\ \cline{2-21} 
 & 1 & 0 & 0 & 1 & 0 & 1 & 1 & 0 & 0 & 1 & 1 & 0 & 1 & 0 & 0 &  &  &  &  &    \\ \cline{2-21} 
 & 1 & 1 & 1 & 0 & 1 & 0 & 0 & 1 & 1 & 0 & 0 & 1 & 0 & 1 & 1 &  &  &  &  &    \\ \cline{2-21} 
 & 1 & 1 & 1 & 0 & 1 & 1 & 0 & 1 & 1 & 0 & 1 & 1 & 0 & 1 & 1 &  &  &  &  &    \\ \cline{2-21} 
 & 1 & 1 & 1 & 1 & 1 & 0 & 1 & 1 & 1 & 1 & 0 & 1 & 1 & 1 & 1 &  &  &  &  &    \\ \cline{2-21} 
 & 1 & 1 & 1 & 1 & 1 & 1 & 1 & 1 & 1 & 1 & 1 & 1 & 1 & 1 & 1 &  &  &  &  &    \\ \hline
16 & 1 & 0 & 0 & 0 & 0 & 0 & 0 & 0 & 0 & 0 & 0 & 0 & 0 & 0 & 0 & 0 &  &  &  &    \\ \hline
17 & 1 & 0 & 0 & 0 & 0 & 0 & 0 & 0 & 0 & 0 & 0 & 0 & 0 & 0 & 0 & 0 & 0 &  &  &    \\ \cline{2-21} 
 & 1 & 0 & 0 & 1 & 0 & 1 & 1 & 1 & 0 & 0 & 1 & 1 & 1 & 0 & 1 & 0 & 0 &  &  &    \\ \cline{2-21} 
 & 1 & 1 & 1 & 0 & 1 & 0 & 0 & 0 & 1 & 1 & 0 & 0 & 0 & 1 & 0 & 1 & 1 &  &  &    \\ \cline{2-21} 
 & 1 & 1 & 1 & 1 & 1 & 1 & 1 & 1 & 1 & 1 & 1 & 1 & 1 & 1 & 1 & 1 & 1 &  &  &    \\ \hline
18 & 1 & 0 & 0 & 0 & 0 & 0 & 0 & 0 & 0 & 0 & 0 & 0 & 0 & 0 & 0 & 0 & 0 & 0 &  &    \\ \cline{2-21} 
 & 1 & 0 & 0 & 0 & 0 & 0 & 1 & 0 & 0 & 0 & 0 & 0 & 1 & 0 & 0 & 0 & 0 & 0 &  &    \\ \cline{2-21} 
 & 1 & 0 & 1 & 0 & 1 & 0 & 0 & 0 & 1 & 0 & 1 & 0 & 0 & 0 & 1 & 0 & 1 & 0 &  &    \\ \cline{2-21} 
 & 1 & 0 & 1 & 0 & 1 & 0 & 1 & 0 & 1 & 0 & 1 & 0 & 1 & 0 & 1 & 0 & 1 & 0 &  &    \\ \hline
19 & 1 & 0 & 0 & 0 & 0 & 0 & 0 & 0 & 0 & 0 & 0 & 0 & 0 & 0 & 0 & 0 & 0 & 0 & 0 &    \\ \cline{2-21} 
 & 1 & 1 & 1 & 1 & 1 & 1 & 1 & 1 & 1 & 1 & 1 & 1 & 1 & 1 & 1 & 1 & 1 & 1 & 1 &    \\ \hline
20 & 1 & 0 & 0 & 0 & 0 & 0 & 0 & 0 & 0 & 0 & 0 & 0 & 0 & 0 & 0 & 0 & 0 & 0 & 0 & 0   \\ \cline{2-21} 
 & 1 & 0 & 0 & 0 & 1 & 0 & 0 & 0 & 1 & 0 & 0 & 0 & 1 & 0 & 0 & 0 & 1 & 0 & 0 & 0   \\ \hline
\end{tabular}
\end{adjustbox}
\medskip
\caption{Circulant Gram matrices of binary cyclic $(k,n)$-frames, first row shown.}\label{tab:circp}
\end{table}

\begin{table}[h]
\centering
\fbox{
\begin{adjustbox}{max width=.94\textwidth}
\begin{tabular}{llllllllllllllllllll|llllllllllllllllll}
& & &  & & & & & \multicolumn{11}{l}{Circulant Gram matrix} &  &  & \multicolumn{17}{l}{Corresponding} \\
 &  &  &  &  &  &  &  &  &  &  &  &  &  &  &  &  &  &  &  &  & \multicolumn{10}{l}{analysis matrix} &  &  &  &  &  &  &    \\
 &  &  &  &  &  &  &  &  &  &  &  &  &  &  &  &  &  &  &  &  &  &  &  &  &  &  &  &  &  &  &  &  &  &  &  &  &  \\
 &  &  &  &  &  &  &  &  &  & 1 & 1 & 1 & 0 & 1 & 1 & 0 & 1 & 1 &  &  & 1 & 0 & 0 & 1 & 1 & 1 & 1 &  &  &  &  &  &  &  &  &  &  \\
 &  &  &  &  &  &  &  &  &  & 1 & 1 & 1 & 1 & 0 & 1 & 1 & 0 & 1 &  &  & 1 & 1 & 1 & 0 & 0 & 1 & 1 &  &  &  &  &  &  &  &  &  &  \\
 &  &  &  &  &  &  &  &  &  & 1 & 1 & 1 & 1 & 1 & 0 & 1 & 1 & 0 &  &  & 1 & 1 & 1 & 1 & 1 & 0 & 0 &  &  &  &  &  &  &  &  &  &  \\
 &  &  &  & \multicolumn{4}{l}{$k=9$} &  &  & 0 & 1 & 1 & 1 & 1 & 1 & 0 & 1 & 1 &  &  & 0 & 1 & 0 & 1 & 1 & 1 & 1 &  &  &  &  &  &  &  &  &  &  \\
 &  &  &  &  &  &  &  &  &  & 1 & 0 & 1 & 1 & 1 & 1 & 1 & 0 & 1 &  &  & 0 & 0 & 0 & 1 & 0 & 1 & 1 &  &  &  &  &  &  &  &  &  &  \\
 &  &  &  &  &  &  &  &  &  & 1 & 1 & 0 & 1 & 1 & 1 & 1 & 1 & 0 &  &  & 0 & 0 & 0 & 0 & 0 & 1 & 0 &  &  &  &  &  &  &  &  &  &  \\
 &  &  &  &  &  &  &  &  &  & 0 & 1 & 1 & 0 & 1 & 1 & 1 & 1 & 1 &  &  & 0 & 0 & 1 & 1 & 1 & 1 & 1 &  &  &  &  &  &  &  &  &  &  \\
 &  &  &  &  &  &  &  &  &  & 1 & 0 & 1 & 1 & 0 & 1 & 1 & 1 & 1 &  &  & 0 & 0 & 0 & 0 & 1 & 1 & 1 &  &  &  &  &  &  &  &  &  &  \\
 &  &  &  &  &  &  &  &  &  & 1 & 1 & 0 & 1 & 1 & 0 & 1 & 1 & 1 &  &  & 0 & 0 & 0 & 0 & 0 & 0 & 1 &  &  &  &  &  &  &  &  &  &  \\
 &  &  &  &  &  &  &  &  &  &  &  &  &  &  &  &  &  &  &  &  &  &  &  &  &  &  &  &  &  &  &  &  &  &  &  &  &  \\
 &  &  &  & 1 & 1 & 1 & 0 & 1 & 0 & 0 & 1 & 1 & 0 & 0 & 1 & 0 & 1 & 1 &  &  & 1 & 0 & 1 & 0 & 1 & 1 & 1 &  &  &  &  &  &  &  &  &  &  \\
 &  &  &  & 1 & 1 & 1 & 1 & 0 & 1 & 0 & 0 & 1 & 1 & 0 & 0 & 1 & 0 & 1 &  &  & 1 & 1 & 0 & 1 & 1 & 0 & 1 &  &  & \multicolumn{5}{l}{$n=7$} &  &  &  \\
 &  &  &  & 1 & 1 & 1 & 1 & 1 & 0 & 1 & 0 & 0 & 1 & 1 & 0 & 0 & 1 & 0 &  &  & 0 & 1 & 0 & 1 & 1 & 0 & 0 &  &  &  &  &  &  &  &  &  &  \\
 &  &  &  & 0 & 1 & 1 & 1 & 1 & 1 & 0 & 1 & 0 & 0 & 1 & 1 & 0 & 0 & 1 &  &  & 1 & 0 & 1 & 1 & 0 & 1 & 1 &  &  &  &  &  &  &  &  &  &  \\
 &  &  &  & 1 & 0 & 1 & 1 & 1 & 1 & 1 & 0 & 1 & 0 & 0 & 1 & 1 & 0 & 0 &  &  & 1 & 1 & 0 & 1 & 1 & 1 & 0 &  &  &  &  &  &  &  &  &  &  \\
 &  &  &  & 0 & 1 & 0 & 1 & 1 & 1 & 1 & 1 & 0 & 1 & 0 & 0 & 1 & 1 & 0 &  &  & 1 & 1 & 0 & 0 & 1 & 0 & 0 &  &  &  &  &  &  &  &  &  &  \\
 &  &  &  & 0 & 0 & 1 & 0 & 1 & 1 & 1 & 1 & 1 & 0 & 1 & 0 & 0 & 1 & 1 &  &  & 0 & 1 & 1 & 0 & 0 & 0 & 1 &  &  &  &  &  &  &  &  &  &  \\
 &  &  &  & 1 & 0 & 0 & 1 & 0 & 1 & 1 & 1 & 1 & 1 & 0 & 1 & 0 & 0 & 1 &  &  & 1 & 0 & 0 & 0 & 0 & 1 & 1 &  &  &  &  &  &  &  &  &  &  \\
 &  &  &  & 1 & 1 & 0 & 0 & 1 & 0 & 1 & 1 & 1 & 1 & 1 & 0 & 1 & 0 & 0 &  &  & 1 & 1 & 0 & 1 & 0 & 0 & 0 &  &  &  &  &  &  &  &  &  &  \\
 &  &  &  & 0 & 1 & 1 & 0 & 0 & 1 & 0 & 1 & 1 & 1 & 1 & 1 & 0 & 1 & 0 &  &  & 0 & 1 & 1 & 0 & 0 & 1 & 0 &  &  &  &  &  &  &  &  &  &  \\
 &  &  &  & 0 & 0 & 1 & 1 & 0 & 0 & 1 & 0 & 1 & 1 & 1 & 1 & 1 & 0 & 1 &  &  & 0 & 0 & 0 & 1 & 0 & 1 & 1 &  &  &  &  &  &  &  &  &  &  \\
 &  &  &  & 1 & 0 & 0 & 1 & 1 & 0 & 0 & 1 & 0 & 1 & 1 & 1 & 1 & 1 & 0 &  &  & 0 & 0 & 0 & 0 & 0 & 1 & 0 &  &  &  &  &  &  &  &  &  &  \\
 &  &  &  & 0 & 1 & 0 & 0 & 1 & 1 & 0 & 0 & 1 & 0 & 1 & 1 & 1 & 1 & 1 &  &  & 0 & 0 & 1 & 1 & 1 & 1 & 1 &  &  &  &  &  &  &  &  &  &  \\
 &  &  &  & 1 & 0 & 1 & 0 & 0 & 1 & 1 & 0 & 0 & 1 & 0 & 1 & 1 & 1 & 1 &  &  & 0 & 0 & 0 & 0 & 1 & 1 & 1 &  &  &  &  &  &  &  &  &  &  \\
 &  &  &  & 1 & 1 & 0 & 1 & 0 & 0 & 1 & 1 & 0 & 0 & 1 & 0 & 1 & 1 & 1 &  &  & 0 & 0 & 0 & 0 & 0 & 0 & 1 &  &  &  &  &  &  &  &  &  &  \\
 &  &  &  &  &  &  &  &  &  &  &  &  &  &  &  &  &  &  &  &  &  &  &  &  &  &  &  &  &  &  &  &  &  &  &  &  &  \\
 &  &  &  & 1 & 0 & 0 & 1 & 0 & 1 & 1 & 0 & 0 & 1 & 1 & 0 & 1 & 0 & 0 &  &  & 1 & 0 & 1 & 0 & 0 & 0 & 1 & 0 & 0 &  &  &  &  &  &  &  &  \\
 &  &  &  & 0 & 1 & 0 & 0 & 1 & 0 & 1 & 1 & 0 & 0 & 1 & 1 & 0 & 1 & 0 &  &  & 0 & 1 & 0 & 0 & 1 & 0 & 0 & 1 & 0 &  &  &  &  &  &  &  &  \\
 &  &  &  & 0 & 0 & 1 & 0 & 0 & 1 & 0 & 1 & 1 & 0 & 0 & 1 & 1 & 0 & 1 &  &  & 1 & 0 & 0 & 0 & 0 & 0 & 1 & 0 & 1 &  &  &  &  &  &  &  &  \\
 &  &  &  & 1 & 0 & 0 & 1 & 0 & 0 & 1 & 0 & 1 & 1 & 0 & 0 & 1 & 1 & 0 &  &  & 1 & 1 & 1 & 0 & 0 & 0 & 1 & 1 & 0 &  &  &  &  &  &  &  &  \\
 &  &  &  & 0 & 1 & 0 & 0 & 1 & 0 & 0 & 1 & 0 & 1 & 1 & 0 & 0 & 1 & 1 &  &  & 1 & 1 & 1 & 1 & 1 & 0 & 0 & 1 & 1 &  &  &  &  &  &  &  &  \\
 &  &  &  & 1 & 0 & 1 & 0 & 0 & 1 & 0 & 0 & 1 & 0 & 1 & 1 & 0 & 0 & 1 &  &  & 0 & 1 & 1 & 0 & 1 & 1 & 0 & 0 & 1 &  &  & \multicolumn{5}{l}{$n=9$} &  \\
 &  &  &  & 1 & 1 & 0 & 1 & 0 & 0 & 1 & 0 & 0 & 1 & 0 & 1 & 1 & 0 & 0 &  &  & 1 & 0 & 1 & 1 & 1 & 0 & 1 & 0 & 0 &  &  &  &  &  &  &  &  \\
 &  &  &  & 0 & 1 & 1 & 0 & 1 & 0 & 0 & 1 & 0 & 0 & 1 & 0 & 1 & 1 & 0 &  &  & 0 & 1 & 1 & 1 & 1 & 1 & 1 & 1 & 0 &  &  &  &  &  &  &  &  \\
 &  &  &  & 0 & 0 & 1 & 1 & 0 & 1 & 0 & 0 & 1 & 0 & 0 & 1 & 0 & 1 & 1 &  &  & 0 & 0 & 0 & 1 & 1 & 1 & 0 & 1 & 1 &  &  &  &  &  &  &  &  \\
 &  &  &  & 1 & 0 & 0 & 1 & 1 & 0 & 1 & 0 & 0 & 1 & 0 & 0 & 1 & 0 & 1 &  &  & 0 & 0 & 0 & 0 & 0 & 1 & 1 & 0 & 1 &  &  &  &  &  &  &  &  \\
 &  &  &  & 1 & 1 & 0 & 0 & 1 & 1 & 0 & 1 & 0 & 0 & 1 & 0 & 0 & 1 & 0 &  &  & 0 & 0 & 1 & 1 & 0 & 0 & 0 & 1 & 0 &  &  &  &  &  &  &  &  \\
 &  &  &  & 0 & 1 & 1 & 0 & 0 & 1 & 1 & 0 & 1 & 0 & 0 & 1 & 0 & 0 & 1 &  &  & 0 & 0 & 0 & 0 & 1 & 1 & 0 & 0 & 1 &  &  &  &  &  &  &  &  \\
 &  &  &  & 1 & 0 & 1 & 1 & 0 & 0 & 1 & 1 & 0 & 1 & 0 & 0 & 1 & 0 & 0 &  &  & 0 & 0 & 0 & 0 & 0 & 0 & 1 & 0 & 0 &  &  &  &  &  &  &  &  \\
 &  &  &  & 0 & 1 & 0 & 1 & 1 & 0 & 0 & 1 & 1 & 0 & 1 & 0 & 0 & 1 & 0 &  &  & 0 & 0 & 0 & 0 & 0 & 0 & 0 & 1 & 0 &  &  &  &  &  &  &  &  \\
 &  &  &  & 0 & 0 & 1 & 0 & 1 & 1 & 0 & 0 & 1 & 1 & 0 & 1 & 0 & 0 & 1 &  &  & 0 & 0 & 0 & 0 & 0 & 0 & 0 & 0 & 1 &  &  &  &  &  &  &  &  \\
\multicolumn{4}{l}{$k=15$} &  &  &  &  &  &  &  &  &  &  &  &  &  &  &  &  &  &  &  &  &  &  &  &  &  &  &  &  &  &  &  &  &  &  \\
 &  &  &  & 1 & 1 & 1 & 1 & 1 & 0 & 1 & 1 & 1 & 1 & 0 & 1 & 1 & 1 & 1 &  &  & 1 & 0 & 0 & 1 & 1 & 1 & 1 & 1 & 1 & 1 & 1 &  &  &  &  &  &  \\
 &  &  &  & 1 & 1 & 1 & 1 & 1 & 1 & 0 & 1 & 1 & 1 & 1 & 0 & 1 & 1 & 1 &  &  & 1 & 1 & 1 & 0 & 0 & 1 & 1 & 1 & 1 & 1 & 1 &  &  &  &  &  &  \\
 &  &  &  & 1 & 1 & 1 & 1 & 1 & 1 & 1 & 0 & 1 & 1 & 1 & 1 & 0 & 1 & 1 &  &  & 1 & 1 & 1 & 1 & 1 & 0 & 0 & 1 & 1 & 1 & 1 &  &  &  &  &  &  \\
 &  &  &  & 1 & 1 & 1 & 1 & 1 & 1 & 1 & 1 & 0 & 1 & 1 & 1 & 1 & 0 & 1 &  &  & 1 & 1 & 1 & 1 & 1 & 1 & 1 & 0 & 0 & 1 & 1 &  &  &  &  &  &  \\
 &  &  &  & 1 & 1 & 1 & 1 & 1 & 1 & 1 & 1 & 1 & 0 & 1 & 1 & 1 & 1 & 0 &  &  & 1 & 1 & 1 & 1 & 1 & 1 & 1 & 1 & 1 & 0 & 0 &  &  &  &  &  &  \\
 &  &  &  & 0 & 1 & 1 & 1 & 1 & 1 & 1 & 1 & 1 & 1 & 0 & 1 & 1 & 1 & 1 &  &  & 0 & 1 & 0 & 1 & 1 & 1 & 1 & 1 & 1 & 1 & 1 &  &  &  &  &  &  \\
 &  &  &  & 1 & 0 & 1 & 1 & 1 & 1 & 1 & 1 & 1 & 1 & 1 & 0 & 1 & 1 & 1 &  &  & 0 & 0 & 0 & 1 & 0 & 1 & 1 & 1 & 1 & 1 & 1 &  &  &  &  &  &  \\
 &  &  &  & 1 & 1 & 0 & 1 & 1 & 1 & 1 & 1 & 1 & 1 & 1 & 1 & 0 & 1 & 1 &  &  & 0 & 0 & 0 & 0 & 0 & 1 & 0 & 1 & 1 & 1 & 1 &  &  & \multicolumn{3}{l}{$n=11$} &  \\
 &  &  &  & 1 & 1 & 1 & 0 & 1 & 1 & 1 & 1 & 1 & 1 & 1 & 1 & 1 & 0 & 1 &  &  & 0 & 0 & 0 & 0 & 0 & 0 & 0 & 1 & 0 & 1 & 1 &  &  &  &  &  &  \\
 &  &  &  & 1 & 1 & 1 & 1 & 0 & 1 & 1 & 1 & 1 & 1 & 1 & 1 & 1 & 1 & 0 &  &  & 0 & 0 & 0 & 0 & 0 & 0 & 0 & 0 & 0 & 1 & 0 &  &  &  &  &  &  \\
 &  &  &  & 0 & 1 & 1 & 1 & 1 & 0 & 1 & 1 & 1 & 1 & 1 & 1 & 1 & 1 & 1 &  &  & 0 & 0 & 1 & 1 & 1 & 1 & 1 & 1 & 1 & 1 & 1 &  &  &  &  &  &  \\
 &  &  &  & 1 & 0 & 1 & 1 & 1 & 1 & 0 & 1 & 1 & 1 & 1 & 1 & 1 & 1 & 1 &  &  & 0 & 0 & 0 & 0 & 1 & 1 & 1 & 1 & 1 & 1 & 1 &  &  &  &  &  &  \\
 &  &  &  & 1 & 1 & 0 & 1 & 1 & 1 & 1 & 0 & 1 & 1 & 1 & 1 & 1 & 1 & 1 &  &  & 0 & 0 & 0 & 0 & 0 & 0 & 1 & 1 & 1 & 1 & 1 &  &  &  &  &  &  \\
 &  &  &  & 1 & 1 & 1 & 0 & 1 & 1 & 1 & 1 & 0 & 1 & 1 & 1 & 1 & 1 & 1 &  &  & 0 & 0 & 0 & 0 & 0 & 0 & 0 & 0 & 1 & 1 & 1 &  &  &  &  &  &  \\
 &  &  &  & 1 & 1 & 1 & 1 & 0 & 1 & 1 & 1 & 1 & 0 & 1 & 1 & 1 & 1 & 1 &  &  & 0 & 0 & 0 & 0 & 0 & 0 & 0 & 0 & 0 & 0 & 1 &  &  &  &  &  &  \\
 &  &  &  &  &  &  &  &  &  &  &  &  &  &  &  &  &  &  &  &  &  &  &  &  &  &  &  &  &  &  &  &  &  &  &  &  &  \\
 &  &  &  & 1 & 1 & 1 & 0 & 1 & 1 & 0 & 1 & 1 & 0 & 1 & 1 & 0 & 1 & 1 &  &  & 1 & 0 & 0 & 1 & 1 & 1 & 1 & 0 & 0 & 1 & 1 & 1 & 1 &  &  &  &  \\
 &  &  &  & 1 & 1 & 1 & 1 & 0 & 1 & 1 & 0 & 1 & 1 & 0 & 1 & 1 & 0 & 1 &  &  & 1 & 1 & 1 & 0 & 0 & 1 & 1 & 1 & 1 & 0 & 0 & 1 & 1 &  &  &  &  \\
 &  &  &  & 1 & 1 & 1 & 1 & 1 & 0 & 1 & 1 & 0 & 1 & 1 & 0 & 1 & 1 & 0 &  &  & 1 & 1 & 1 & 1 & 1 & 0 & 0 & 1 & 1 & 1 & 1 & 0 & 0 &  &  &  &  \\
 &  &  &  & 0 & 1 & 1 & 1 & 1 & 1 & 0 & 1 & 1 & 0 & 1 & 1 & 0 & 1 & 1 &  &  & 0 & 1 & 0 & 1 & 1 & 1 & 1 & 0 & 0 & 1 & 1 & 1 & 1 &  &  &  &  \\
 &  &  &  & 1 & 0 & 1 & 1 & 1 & 1 & 1 & 0 & 1 & 1 & 0 & 1 & 1 & 0 & 1 &  &  & 0 & 0 & 0 & 1 & 0 & 1 & 1 & 1 & 1 & 0 & 0 & 1 & 1 &  &  &  &  \\
 &  &  &  & 1 & 1 & 0 & 1 & 1 & 1 & 1 & 1 & 0 & 1 & 1 & 0 & 1 & 1 & 0 &  &  & 0 & 0 & 0 & 0 & 0 & 1 & 0 & 1 & 1 & 1 & 1 & 0 & 0 &  &  &  &  \\
 &  &  &  & 0 & 1 & 1 & 0 & 1 & 1 & 1 & 1 & 1 & 0 & 1 & 1 & 0 & 1 & 1 &  &  & 0 & 0 & 1 & 1 & 1 & 1 & 1 & 0 & 0 & 1 & 1 & 1 & 1 &  &  &  &  \\
 &  &  &  & 1 & 0 & 1 & 1 & 0 & 1 & 1 & 1 & 1 & 1 & 0 & 1 & 1 & 0 & 1 &  &  & 0 & 0 & 0 & 0 & 1 & 1 & 1 & 1 & 1 & 0 & 0 & 1 & 1 &  & \multicolumn{3}{l}{$n=13$} \\
 &  &  &  & 1 & 1 & 0 & 1 & 1 & 0 & 1 & 1 & 1 & 1 & 1 & 0 & 1 & 1 & 0 &  &  & 0 & 0 & 0 & 0 & 0 & 0 & 1 & 1 & 1 & 1 & 1 & 0 & 0 &  &  &  &  \\
 &  &  &  & 0 & 1 & 1 & 0 & 1 & 1 & 0 & 1 & 1 & 1 & 1 & 1 & 0 & 1 & 1 &  &  & 0 & 0 & 0 & 0 & 0 & 0 & 0 & 1 & 0 & 1 & 1 & 1 & 1 &  &  &  &  \\
 &  &  &  & 1 & 0 & 1 & 1 & 0 & 1 & 1 & 0 & 1 & 1 & 1 & 1 & 1 & 0 & 1 &  &  & 0 & 0 & 0 & 0 & 0 & 0 & 0 & 0 & 0 & 1 & 0 & 1 & 1 &  &  &  &  \\
 &  &  &  & 1 & 1 & 0 & 1 & 1 & 0 & 1 & 1 & 0 & 1 & 1 & 1 & 1 & 1 & 0 &  &  & 0 & 0 & 0 & 0 & 0 & 0 & 0 & 0 & 0 & 0 & 0 & 1 & 0 &  &  &  &  \\
 &  &  &  & 0 & 1 & 1 & 0 & 1 & 1 & 0 & 1 & 1 & 0 & 1 & 1 & 1 & 1 & 1 &  &  & 0 & 0 & 0 & 0 & 0 & 0 & 0 & 0 & 1 & 1 & 1 & 1 & 1 &  &  &  &  \\
 &  &  &  & 1 & 0 & 1 & 1 & 0 & 1 & 1 & 0 & 1 & 1 & 0 & 1 & 1 & 1 & 1 &  &  & 0 & 0 & 0 & 0 & 0 & 0 & 0 & 0 & 0 & 0 & 1 & 1 & 1 &  &  &  &  \\
 &  &  &  & 1 & 1 & 0 & 1 & 1 & 0 & 1 & 1 & 0 & 1 & 1 & 0 & 1 & 1 & 1 &  &  & 0 & 0 & 0 & 0 & 0 & 0 & 0 & 0 & 0 & 0 & 0 & 0 & 1 &  &  &  & 
\end{tabular}
\end{adjustbox}
}
\medskip
\caption{Gram and analysis matrices of binary cyclic $(k,n)$-frames, $k>n$, whose vectors do not repeat.}\label{tab:circpnontriv}
\end{table}


\end{document}